\newcommand{\FF}{\mathbb{F}}  
\newcommand{\NN}{\mathbb{N}} 
\newcommand{\QQ}{\mathbb{Q}}  
\newcommand{\ZZ}{\mathbb{Z}}
\newtheorem{Theorem}{Theorem}[section]
\newtheorem{Lemma}[Theorem]{Lemma} 
\newtheorem{Conjecture}[Theorem]{Conjecture} 
\newtheorem{Proposition}[Theorem]{Proposition} 
\newtheorem{Corollary}[Theorem]{Corollary} 
\newtheorem{Definition}[Theorem]{Definition}  
\newtheorem{Remark}[Theorem]{Remark} 
\newtheorem{Example}[Theorem]{Example}
\DeclareMathOperator{\ord}{ord}
\begin{document}

\author{Seoyoung Kim}
\author{Alex Walsh}

\email{Seoyoung\_Kim@math.brown.edu}
\address{Mathematics Department, Brown University, Box 1917, 151 Thayer Street, Providence, RI 02912 USA}

\email{alexandra\_walsh@brown.edu}
\address{Box 5949, 69 Brown Street, Providence, RI 02912 USA}

\title[Elliptic Fermat Numbers and Elliptic Divisibility Sequence]{Elliptic Fermat Numbers and Elliptic Divisibility Sequence}

\date{\today}

\begin{abstract}

For a pair $(E,P)$ of an elliptic curve $E/\mathbb{Q}$ and a nontorsion point $P\in E(\QQ)$, the sequence of \emph{elliptic Fermat numbers} is defined by taking quotients of terms in the corresponding elliptic divisibility sequence $(D_{n})_{n\in\NN}$ with index powers of two, i.e. $D_{1}$, $D_{2}/D_{1}$, $D_{4}/D_{2}$, etc. Elliptic Fermat numbers share many properties with the classical Fermat numbers, $F_{k}=2^{2^k}+1$. In the present paper, we show that for magnified elliptic Fermat sequences, only finitely many terms are prime. We also define \emph{generalized elliptic Fermat numbers} by taking quotients of terms in elliptic divisibility sequences that correspond to powers of any integer $m$, and show that many of the classical Fermat properties, including coprimality, order universality and compositeness, still hold.

\end{abstract}

\subjclass[2010]{Primary 11G05; Secondary 11B37, 11B39, 11Y11}

\keywords{elliptic Fermat numbers, elliptic divisibility sequence, Fermat numbers}

\maketitle
\tableofcontents

\section{Introduction}
\noindent
Let $E$ be an elliptic curve defined over $\QQ$ by a Weierstrass equation with integer coefficients
\begin{equation}
\label{WE}
E: y^{2}+a_{1}y+a_{3}xy=x^{3}+a_{2}x^{2}+a_{4}x+a_{6}.
\end{equation}

We say (\ref{WE}) is {\it minimal} if the discriminant $|\Delta(E)|$ is minimal among all Weierstrass equations for $E$. Moreover, we say a minimal Weierstrass equation is {\it reduced} if $a_{1},a_{3}\in\{0,1\}$ and $a_{2}\in\{-1,0,1\}$. It is worth noting that for all elliptic curves over $\QQ$, minimal models exist and a reduced minimal model is unique.

For a fixed nontorsion point $P\in E(\QQ)$, we can define the elliptic divisibility sequence as follows:

\begin{Definition}
The {\it elliptic divisibility sequence} (EDS) associated to the pair $(E,P)$ is the sequence $D=(D_{n})_{n\in\NN}:\NN\rightarrow \NN$ defined by taking the positive square root of the denominator of successive iterations of $P$ as a lowest fraction, i.e.,
$$[n]P=\left(\frac{A_{n}}{D_{n}^{2}}, \frac{B_{n}}{D_{n}^{3}}\right),$$
where $\gcd(A_{n},D_{n})=\gcd(B_{n},D_{n})=1$. An EDS is {\it minimal} if the Weierstrass equation of $E/\QQ$ is minimal and reduced. Also, an EDS is {\it normalized} if $D_{1}=1$.   
\end{Definition}
In this paper, we always assume $D_{1}=1$. Much of our work revolves around \emph{elliptic Fermat numbers}, analogues of the classical Fermat numbers ($F_{n}=2^{2^{n}}+1$, $n\geq 0$) defined by S. Binegar, R. Dominick, M. Kenney, J. Rouse, and A. Walsh in \cite{BDKRW}. In the original version of the paper, they define elliptic Fermat numbers as follows:

\begin{Definition}
\label{Definition2}
Let $D=(D_{n})_{n\in\NN}$ be an EDS. Define the sequence of elliptic Fermat numbers (EFN) $\{F_{k}(E,P)\}_{k\geq 1}$ as follows:
\begin{displaymath} 
F_{k}(E,P) = \left\{ 
\begin{array}{ll} 
\frac{D_{2^{k}}}{D_{2^{k-1}}} & \textrm{if $k\geq 1$}\\ 
D_{1} & \textrm{k=0}
\end{array} 
\right. \end{displaymath}
\end{Definition}    

Note that in the final version of the paper includes slightly different definition of elliptic Fermat numbers, on the other hand, it does not have much effect on our proof too much, which uses the original definition. In \cite[Theorem 9]{BDKRW}, they list a certain set of conditions which force $F_{k}(E,P)$ to be composite for all $k\geq 1$: 

\begin{Theorem}[BDKRW]
For an elliptic curve $E: y^2 = x^3 + ax^2 + bx + c$, assume the following:
\begin{enumerate}[(i)]
\item $E(\mathbb{Q}) = \langle P,T \rangle$, where $P$ has infinite order and $T$ is a rational point of order $2$.
\item $E$ has an egg.
\item $T$ is on the egg.
\item $T$ is the only integral point on the egg.
\item $P$ is not integral.
\item $\gcd(b,m_0) = 1$.
\item $|\tau_k| = 2$ for all $k$.
\item $2 \nmid e_k$ for all $k$.
\item The equations $x^4 + ax^2y^2 + by^4 = \pm 1$ has no integer solutions where $y \not \in \{0,\pm 1 \}$.
\end{enumerate}
Then $F_k(E,P)$ is composite for all $k \geq 1$.
\end{Theorem}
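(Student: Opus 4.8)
The plan is to produce, for every $k\ge1$, an explicit factorization of $F_{k}(E,P)$ into two integers both exceeding $1$, obtained from the fact that multiplication by $2$ on $E$ factors through the $2$-isogeny cut out by $T$.

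First I would normalize. Since $T$ is integral (it is the only integral point on the egg, by (iv)), translating the $x$-coordinate by $x(T)\in\ZZ$ brings $E$ to the form $y^{2}=x^{3}+ax^{2}+bx$ with $T=(0,0)$, changing neither the denominators $D_{n}$ nor the coprimality $\gcd(A_{n},D_{n})=1$. Writing $[m]P=(A_{m}/D_{m}^{2},B_{m}/D_{m}^{3})$, the curve equation gives $B_{m}^{2}=A_{m}C_{m}$ with $C_{m}:=A_{m}^{2}+aA_{m}D_{m}^{2}+bD_{m}^{4}$. Let $\phi\colon E\to\bar E$ be the $2$-isogeny with kernel $\{O,T\}$ and $\hat\phi$ its dual, so $\hat\phi\circ\phi=[2]$. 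The formula $x(\phi(x,y))=y^{2}/x^{2}$ shows that $x(\phi([m]P))=C_{m}/(A_{m}D_{m}^{2})$ in lowest terms, using (vi) in the form $\gcd(A_{m},b)=1$ (so $\gcd(A_{m},C_{m})=1$); since EDS denominators are squares this forces $A_{m}=s_{m}^{2}$, and then $B_{m}^{2}=A_{m}C_{m}$ forces $C_{m}=\bar s_{m}^{2}$, with $s_{m}\bar s_{m}=|B_{m}|$. Running $[2^{k-1}]\phi(P)$ through the analogous computation for $\hat\phi$ — whose $x$-formula carries the extra factor $\tfrac14$, the source of $\tau_{k}$ — yields $D_{2^{k}}=\pm2\,\bar s_{2^{k-1}}\bar D_{2^{k-1}}=\pm2\,s_{2^{k-1}}\bar s_{2^{k-1}}D_{2^{k-1}}$, hence
\[
F_{k}(E,P)=\frac{D_{2^{k}}}{D_{2^{k-1}}}=\tau_{k}\,s_{2^{k-1}}\,\bar s_{2^{k-1}},\qquad |\tau_{k}|=2 ,
\]
with (vii)--(viii) the $2$-adic conditions guaranteeing that this factor $2$ survives exactly (rather than becoming $\pm1$ or a higher power of $2$).

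Now set $m=2^{k-1}$. The point of the computation is that, since $A_{m}=s_{m}^{2}$,
\[
\bar s_{m}^{2}=C_{m}=s_{m}^{4}+a\,s_{m}^{2}D_{m}^{2}+b\,D_{m}^{4}=\bigl(x^{4}+ax^{2}y^{2}+by^{4}\bigr)\big|_{(x,y)=(s_{m},\,D_{m})} .
\]
To conclude I would argue: $D_{2^{k-1}}\ge2$, because $(D_{n})$ is a divisibility sequence, so $D_{1}\mid D_{2}\mid D_{4}\mid\cdots$ and $D_{1}\ge2$ as $P$ is not integral by (v); hence $D_{m}\notin\{0,\pm1\}$, so (ix) forces $\bar s_{m}^{2}=C_{m}\ne\pm1$; and $C_{m}\ne0$, since $C_{m}=0$ would make $\phi([m]P)$ a $2$-torsion point of $\bar E$, hence $[2m]P\in\ker\phi$, contradicting that $P$ is nontorsion. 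Thus $\bar s_{m}^{2}$ is a nonzero perfect square other than $1$, so $\bar s_{m}\ge2$; also $s_{m}\ge1$, since $A_{m}=0$ would force $[m]P=T$. Therefore $F_{k}(E,P)=(2s_{m})\cdot\bar s_{m}$ with $2s_{m}\ge2$ and $\bar s_{m}\ge2$, so $F_{k}(E,P)$ is composite.

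The real work — and the main obstacle — is establishing the displayed identity $F_{k}=\pm2\,s_{2^{k-1}}\bar s_{2^{k-1}}$ with every constant correct: the coprimality $\gcd(A_{m},C_{m})=1$ (hypothesis (vi)); the exact power of $2$ contributed by $\hat\phi$ (hypotheses (vii)--(viii)); and the bookkeeping needed to read off $\bar D_{m}$ from a possibly non-minimal model of $\bar E$. Crucially one also needs the correct \emph{signs} of $A_{m}$ and $C_{m}$, and this is where the egg enters: hypotheses (ii)--(iv) are what let one control on which real component the points $[2^{k-1}]P$ and their $\phi$-images lie, so that $A_{m}>0$ and $C_{m}\ge0$ hold with the sign choices above, and guarantee that no integral point other than $T$ is produced on the egg along the descent. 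Finally, the smallest case $k=1$, where one has only $D_{1}\ge2$ rather than a long divisibility chain, would be treated separately using (v) and (ix).
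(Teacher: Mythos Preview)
This theorem is not proved in the present paper at all: it is quoted verbatim from \cite[Theorem~9]{BDKRW} as background, and no argument for it appears here. So there is no ``paper's own proof'' to compare your proposal against.

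That said, your outline is the correct strategy and is essentially the one used in \cite{BDKRW}: factor $[2]$ through the $2$-isogeny $\phi$ with kernel $\{O,T\}$, track denominators through $\phi$ and $\hat\phi$ to obtain $F_{k}=\tau_{k}\,s_{2^{k-1}}\bar s_{2^{k-1}}$, and then use (ix) to rule out $\bar s_{m}=\pm1$. You have also correctly located where each hypothesis enters --- (vi) for the coprimality $\gcd(A_{m},C_{m})=1$, (vii)--(viii) for the exact $2$-adic contribution, (ii)--(iv) for the sign of $A_{m}$, and (v) to get $D_{m}\notin\{0,\pm1\}$. One caution: the symbol $m_{0}$ in (vi) and the quantities $\tau_{k},e_{k}$ in (vii)--(viii) are defined in \cite{BDKRW}, not in this paper, so before writing a full proof you would need to consult that source to make sure your interpretation of (vi) as ``$\gcd(A_{m},b)=1$'' and your $2$-adic normalization actually match theirs. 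Also note that this paper imposes $D_{1}=1$ throughout, which contradicts hypothesis (v); that is another artifact of the theorem being imported from a different setting.
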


In the same vein, we prove the non-primality of a sequence of elliptic Fermat numbers which is defined by magnified elliptic divisibility sequences. First, we recall the definition of magnified elliptic divisibility sequences.  

\begin{Definition}
Let $E/\QQ$ and $E'/\QQ$ be two elliptic curves. We say a rational point $P\in E(\QQ)$ is {\it magnified} if $P=\phi(P')$ for some (nonzero) isogeny $\phi: E'\rightarrow E$ over $\QQ$ and some $P'\in E'(\QQ)$. Moreover, an EDS $D=(D_{n})_{n\geq 1}$ is {\it magnified} if $D$ is a minimal EDS associated to some magnified point on an elliptic curve over $\QQ$. We call a sequence of elliptic Fermat numbers $\{F_{k}(E,P)\}_{k\geq 1}$ {\it magnified} if it is defined by using a magnified EDS. 
\end{Definition}

In Section $3$, we prove the following non-primality result of the sequence of magnified elliptic Fermat numbers.

\begin{Theorem}
Let $E/\QQ$ be a minimal magnified elliptic curve with a fixed point $P\in E(\QQ)$ having a (nonzero) odd-degree isogeny $\phi: E'\rightarrow E$ from a minimal elliptic curve $E'/\QQ$ satisfying $\phi(P')=P$. Then the terms $F_{k}(E,P)$ are composite for sufficiently large $k$.
\end{Theorem}

In section $4$, we consider a generalization of elliptic Fermat numbers. Generalized classical Fermat numbers have the form $a^{2^n} + b^{2^n}$ for some relatively prime integers $a$ and $b$. It is natural to consider a similar generalization of elliptic Fermat numbers:

\begin{Definition}
\label{GEFN def}
Let $D=(D_{n})_{n\in\NN}$ be an EDS, and let $m 
\geq 1$ be an integer. We define the sequence of {\it generalized elliptic Fermat numbers} $\{{F_k}^{(m)}(E,P)\}$ as follows:
\begin{displaymath} 
{F_k}^{(m)}(E,P) = \left\{ 
\begin{array}{ll} 
\frac{D_{m^{k}}}{D_{m^{k-1}}} & \textrm{if $k\geq 1$}\\ 
D_{1} & \textrm{if $k = 0.$}
\end{array} 
\right. \end{displaymath}
\end{Definition}

Note that Definition \ref{Definition2} is the special case where $m = 2$. In \cite[Theorem 3, Theorem 4]{BDKRW}, they prove the following theorems about elliptic Fermat numbers:

\begin{Theorem}[BDKRW]
For all $k \neq \ell$, $\gcd(F_k (E, P),F_\ell(E, P))\in \lbrace 1,2 \rbrace$.
\end{Theorem}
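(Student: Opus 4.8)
The plan is to deduce the coprimality statement directly from the divisibility properties of the underlying elliptic divisibility sequence $(D_n)_{n\in\NN}$. The key classical fact is that an EDS is a strong divisibility sequence up to bounded factors: if $d = \gcd(m,n)$, then $D_m$ and $D_n$ share only those prime factors dividing $D_d$, and more precisely $\gcd(D_m, D_n) = D_d$ once we work with the normalized minimal EDS (or at worst differ from $D_d$ by a controlled power of $2$, which is exactly the source of the $\lbrace 1,2\rbrace$ in the statement). I would begin by recalling this divisibility property and fixing notation: write $F_k = F_k(E,P) = D_{m^k}/D_{m^{k-1}}$ with $m=2$, and without loss of generality take $k > \ell \geq 1$.

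Next I would observe that $D_{m^{\ell}} \mid D_{m^{k}}$ since $m^{\ell} \mid m^{k}$, so each $F_j$ is an integer and the telescoping identity $D_{2^k} = D_1 \prod_{j=1}^{k} F_j$ holds. The crux is then to bound $\gcd(F_k, F_\ell)$. Suppose a prime $p$ divides both $F_k$ and $F_\ell$. Then $p \mid D_{2^k}$ and $p \mid D_{2^{\ell}}$, so by the strong divisibility property $p \mid \gcd(D_{2^k}, D_{2^{\ell}}) = D_{2^{\ell}}$ (up to the factor of $2$). Now I want to show $p$ cannot divide $F_k = D_{2^k}/D_{2^{\ell}} \cdot (\text{stuff})$ to a positive power unless $p=2$: the point is that $p \mid D_{2^\ell}$ forces $p \mid D_{2^{\ell-1}}$ or not; if $p \mid D_{2^{\ell-1}}$ then $p \nmid F_\ell$, contradiction, so $p \mid D_{2^\ell}$ but $p\nmid D_{2^{\ell-1}}$, i.e. $p$ first appears at index $2^\ell$ (rank of apparition exactly $2^\ell$). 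Then the standard theory of ranks of apparition in an EDS says that $p \mid D_n$ iff $(\text{rank of }p) \mid n$, and the exponent of $p$ in $D_n$ grows in a controlled way (a $p$-adic valuation formula, as in Ward's work): $v_p(D_{2^k}) = v_p(D_{2^\ell})$ unless $2^\ell \cdot p \mid 2^k$ or there is wild ramification, which for $p$ odd only happens when $p \mid 2^{k-\ell}$ — impossible. Hence $v_p(D_{2^k}) = v_p(D_{2^{\ell}})$ for odd $p$, so $v_p(F_k) = v_p(D_{2^k}) - v_p(D_{2^{k-1}}) \leq v_p(D_{2^k}) - v_p(D_{2^{\ell}}) = 0$ (using $\ell \leq k-1$), giving $p \nmid F_k$, a contradiction. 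Thus the only possible common prime is $p = 2$, and a separate check shows $4 \nmid \gcd$, so $\gcd(F_k, F_\ell) \in \lbrace 1, 2 \rbrace$.

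I expect the main obstacle to be the precise $p$-adic valuation bookkeeping at $p = 2$ and at primes of bad reduction: one must invoke the correct form of the EDS valuation formula (the analogue of lifting-the-exponent for elliptic divisibility sequences, where $v_p(D_{np}) = v_p(D_n) + v_p(D_p)$ type relations hold away from small exceptional primes) and verify that the normalization $D_1 = 1$ together with minimality of the Weierstrass model eliminates spurious factors — exactly the subtlety that distinguishes $\gcd = 1$ from $\gcd = 2$. A clean way to organize this is to cite the strong divisibility property of minimal normalized EDS (e.g. from Shipsey or Ayad) as a black box, reducing the whole argument to the elementary index computation $\gcd(2^k, 2^{\ell}) = 2^{\min(k,\ell)}$ and the observation that $F_k$ and $F_\ell$ are built from \emph{disjoint} ranges of the EDS except for the overlap already absorbed into $D_{2^{\min(k,\ell)}}$. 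If the black-box strong divisibility is stated with the factor-of-$2$ ambiguity built in (as is standard over $\QQ$), then the $\lbrace 1, 2\rbrace$ conclusion is immediate and the proof is quite short.
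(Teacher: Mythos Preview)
This theorem is quoted from \cite{BDKRW} and is not proved in the present paper; what the paper does prove is the generalization to $F_k^{(m)}$ for odd $m$ (the Coprimality Theorem in Section~3). Your argument is essentially the same as the paper's proof of that generalization. Both reduce to controlling $\ord_p(D_{2^k})$ once $p$ has entered the sequence; the paper packages this as the Silverman--Stephens lemma (stated here as Lemma~3.3): if $p\mid D_n$ then $\ord_p(D_{mn})=\ord_p(mD_n)$, with a precisely described exception at $p=2$. From this the paper deduces $\ord_p(F_s^{(m)})=\ord_p(m)$ for every $s$ past the entry point of $p$, and then runs exactly your case split on the entry index $t$. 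Your ``rank of apparition plus valuation formula'' is the same mechanism phrased in the older Ward language; citing the SS lemma directly is cleaner than invoking strong divisibility as a black box, because the strong-divisibility statement $\gcd(D_m,D_n)=D_{\gcd(m,n)}$ is itself a consequence of the valuation formula and does not by itself give the stagnation $v_p(D_{2^k})=v_p(D_{2^\ell})$ you need.

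The one genuinely thin spot is your $p=2$ endgame. For the $m=2$ Fermat numbers the exceptional clause of the SS lemma is live (since $2\mid m$), so the equality $\ord_2(D_{2n})=\ord_2(2D_n)$ can fail, and ``a separate check shows $4\nmid\gcd$'' is where the actual work sits: one must use that the failure occurs only when $\ord_2(D_n)=1$, which caps the extra $2$-adic growth at one. This is precisely why the paper's own generalization restricts to odd $m$ and obtains $\gcd\mid m$ without exceptions, while the $m=2$ result from \cite{BDKRW} lands on $\{1,2\}$ rather than $\{1\}$. If you want a self-contained proof of the stated theorem, that $2$-adic bound is the step you still have to write out.
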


\begin{Theorem}[BDKRW]
Let $\Delta(E)$ be the discriminant of $E$ and suppose that $N$ is a positive
integer with $\gcd(N,6 \Delta(E)) = 1$. Then $P$ has order $2^k$ in $E(\mathbb{Z}/N\mathbb{Z})$ if and only if $N \mid F_0(E,P) \cdots F_k(E,P) $ and $ N \nmid F_0(E,P) \cdots F_{k-1}(E,P).$ 
\end{Theorem}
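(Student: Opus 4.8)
The plan is to reduce everything to the telescoping structure of the products of the $F_j$'s together with the standard dictionary relating divisibility of elliptic divisibility sequence terms to the vanishing of multiples of $P$ under reduction.

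First I would record the telescoping identity. Since an EDS is a divisibility sequence, $D_{2^{j-1}}\mid D_{2^j}$, so each $F_j(E,P)$ is a positive integer and
$$\prod_{j=0}^{k}F_j(E,P)=D_1\cdot\frac{D_2}{D_1}\cdot\frac{D_4}{D_2}\cdots\frac{D_{2^k}}{D_{2^{k-1}}}=D_{2^k}.$$
Consequently the two divisibility hypotheses ``$N\mid F_0(E,P)\cdots F_k(E,P)$'' and ``$N\nmid F_0(E,P)\cdots F_{k-1}(E,P)$'' are exactly ``$N\mid D_{2^k}$'' and ``$N\nmid D_{2^{k-1}}$'', so it suffices to prove that the order of $P$ in $E(\ZZ/N\ZZ)$ equals $2^k$ if and only if $N\mid D_{2^k}$ and $N\nmid D_{2^{k-1}}$.

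The technical core is the lemma: for $N$ with $\gcd(N,6\Delta(E))=1$ and any $n\ge 1$,
$$N\mid D_n\iff [n]P\equiv O\text{ in }E(\ZZ/N\ZZ)\iff \ord(P\bmod N)\mid n.$$
To prove it, by the Chinese Remainder Theorem I would reduce to each prime power $p^a\parallel N$ (so $p\ge 5$ and $p\nmid\Delta(E)$). Good reduction at $p$ makes $E(\ZZ/p^a\ZZ)$ a finite abelian group, and since $D_1=1$ the point $P$ is integral, so it reduces to $E(\ZZ/p^a\ZZ)$ and $[n]P$ reduces to $n$ times its reduction. Writing $[n]P=(A_n/D_n^2,\,B_n/D_n^3)$ with $\gcd(A_n,D_n)=\gcd(B_n,D_n)=1$, one checks that $[n]P$ lies in the kernel of reduction mod $p$ precisely when $p\mid D_n$, and in that case the formal-group parameter $z=-x/y$ has $v_p(z)=v_p(D_n)$; because $p\ge 5$ the formal group $\hat E(p\ZZ_p)$ is torsion-free and the $p$-adic depth filtration on $E(\ZZ/p^a\ZZ)$ is measured exactly by $v_p(z)$, so $[n]P$ becomes trivial in $E(\ZZ/p^a\ZZ)$ iff $v_p(D_n)\ge a$, i.e. $p^a\mid D_n$. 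Taking the least common multiple over the prime powers dividing $N$ gives the lemma. This step is where $\gcd(N,6\Delta(E))=1$ is genuinely used: $\Delta$ gives good reduction, and the factor $6$ excludes $p=2,3$, where one must worry about torsion in the formal group (for instance $\hat E(2\ZZ_2)$ can have $2$-torsion) and the clean valuation statement can fail.

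Finally I would assemble the pieces. Combining the telescoping identity with the lemma, the hypotheses of the theorem are equivalent to ``$\ord(P\bmod N)\mid 2^k$ and $\ord(P\bmod N)\nmid 2^{k-1}$''. A positive integer that divides $2^k$ but not $2^{k-1}$ is necessarily $2^k$ itself, and conversely $2^k$ divides $2^k$ but not $2^{k-1}$; hence the hypotheses hold if and only if $\ord(P\bmod N)=2^k$, which is precisely the assertion. The only real obstacle is the prime-power refinement of the EDS--reduction dictionary in the middle step; the telescoping identity and the concluding divisor argument are routine.
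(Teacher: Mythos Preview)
Your argument is correct and follows essentially the same route as the paper (which cites this as a result of \cite{BDKRW} and, in its generalization, sketches the proof as ``define the reduction homomorphism $E(\QQ)\to E(\ZZ/N\ZZ)$ and use $\phi(m^kP)=m^k\phi(P)$''): the telescoping identity $\prod_{j\le k}F_j=D_{2^k}$ together with the standard dictionary ``$N\mid D_n\iff [n]P\equiv O$ in $E(\ZZ/N\ZZ)$'' is exactly that homomorphism statement unpacked. You supply more detail than the paper does---in particular the prime-power reduction and the formal-group valuation argument explaining why $\gcd(N,6\Delta(E))=1$ is needed---but the underlying strategy is the same.
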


For generalized elliptic Fermat numbers generated by odd $m$, we prove slightly weakened generalizations of these properties, namely:

\begin{Theorem}[Coprimality]
\label{GEFN coprimality theorem}
Let $F=(F_{k}^{(m)}(E,P))_{k\in\NN}$ be the sequence of generalized elliptic Fermat numbers for a fixed elliptic curve $E$, a rational point $P\in E(\QQ)$ and an odd integer $m \geq 1$. Then for all distinct $k, \ell \geq 0$, 
$$\gcd({F_k}^{(m)}(E,P),{F_\ell}^{(m)}(E,P)) \mid m.$$
\end{Theorem}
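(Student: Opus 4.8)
The plan is to mimic the classical argument for coprimality of Fermat numbers, where one uses the telescoping identity $F_0 F_1 \cdots F_{k-1} = F_k - 2$ together with the fact that a common divisor of $F_k$ and $F_\ell$ (say $\ell < k$) must then divide $2$. The elliptic analogue of this identity is the multiplicative telescoping $D_1 \cdot \prod_{j=1}^{k} F_j^{(m)}(E,P) = D_{m^k}$, which is immediate from the definition since the product collapses. So the first step is to record this telescoping and reduce the problem to understanding $\gcd$ of two terms $D_{m^k}$ and $D_{m^\ell}$, or rather of the quotients.

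The key arithmetic input is the divisibility property of elliptic divisibility sequences: if $n \mid n'$ then $D_n \mid D_{n'}$, and more precisely the "strong divisibility"-type statement controlling $\gcd(D_a, D_b)$ in terms of $D_{\gcd(a,b)}$. Concretely, I would use that a prime $p$ divides $D_n$ for some $n$ precisely when $p \mid D_{r_p}$ where $r_p$ is the rank of apparition of $p$ (the smallest index with $p \mid D_{r_p}$), and that then $p \mid D_n \iff r_p \mid n$ — with the usual caveat at $p = 2$ and primes of bad reduction, which is exactly why the conclusion is $\gcd \mid m$ rather than $\gcd = 1$. So suppose $p$ is an odd prime of good reduction dividing both $F_k^{(m)}$ and $F_\ell^{(m)}$ with $\ell < k$. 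Then $p \mid D_{m^k}$ and $p \mid D_{m^\ell}$, so $r_p \mid \gcd(m^k, m^\ell) = m^\ell$, hence $r_p \mid m^\ell$, and in fact $r_p \mid m^{\ell}$ forces (since $p \mid F_\ell^{(m)} = D_{m^\ell}/D_{m^{\ell-1}}$ and $r_p \nmid m^{\ell-1}$ would have to hold for $p$ to survive the quotient) that $r_p$ is exactly a power of $m$ times... — more carefully, $r_p \mid m^\ell$ but $r_p \nmid m^{\ell-1}$, and similarly from $p \mid F_k^{(m)}$ we get $r_p \mid m^k$ but $r_p \nmid m^{k-1}$. The second pair of conditions with $k > \ell$ gives $r_p \mid m^k$ and $r_p \nmid m^{k-1}$; combined with $r_p \mid m^\ell$ and $\ell \le k-1$, we get $r_p \mid m^{k-1}$, a contradiction — unless the valuation bookkeeping only fails at $p \mid m$. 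Tracking this carefully shows the only primes that can be common divisors divide $m$ (or lie among the finite bad set, which one handles by a separate valuation estimate at $p = 2$ and at $p \mid \Delta(E)$, showing their contribution is also bounded by a divisor of $m$, or is excluded outright as in the $m=2$ case).

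So the steps in order are: (1) state and prove the telescoping product identity $\prod_{j=1}^k F_j^{(m)} = D_{m^k}$; (2) recall the rank-of-apparition / strong divisibility properties of EDS for primes of good reduction away from $2$, citing the relevant standard references (Ward, Shipsey, Silverman); (3) for an odd prime $p \nmid m \Delta(E)$ dividing two distinct generalized Fermat numbers, derive the contradiction on $\ord_m(r_p)$ as above; (4) handle the finitely many "bad" primes ($p = 2$, $p \mid \Delta(E)$) by a direct $p$-adic valuation argument showing $v_p(\gcd(F_k^{(m)}, F_\ell^{(m)})) \le v_p(m)$, for which the standard tool is the formula relating $v_p(D_n)$ to $v_p(D_{r_p})$ and $v_p(n/r_p)$ (the elliptic analogue of lifting-the-exponent).

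The main obstacle I expect is step (4), the bad-prime bookkeeping: for the classical Fermat numbers the only shared prime is $2$ and it is handled by one clean observation, but in the elliptic setting the reduction at $2$ and at primes of bad reduction does not obey the clean $p \mid D_n \iff r_p \mid n$ rule, so one needs the refined valuation statement $v_p(D_{nm^i}) = v_p(D_{nm^{i-1}}) + v_p(m) + (\text{correction})$ and must check the correction terms telescope away in the quotient $F_k^{(m)}/\cdots$ so that their accumulated contribution to any common divisor is at most $v_p(m)$. This is why the theorem's conclusion is the divisibility $\gcd \mid m$ rather than an equality, and getting the constant exactly right (rather than, say, $\gcd \mid m^2$) is where the care is needed.
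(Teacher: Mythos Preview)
Your proposal correctly identifies the key arithmetic input --- the elliptic lifting-the-exponent formula controlling $\ord_p(D_{mn})$ in terms of $\ord_p(D_n)$ and $\ord_p(m)$ --- but the architecture you build around it is more complicated than necessary, and step~(3) has a real gap as written.

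In step~(3) you want to deduce from $p \mid F_k^{(m)}$ that $r_p \nmid m^{k-1}$, so as to derive a contradiction from $r_p \mid m^\ell \mid m^{k-1}$. But $p \mid F_k^{(m)}$ only says $\ord_p(D_{m^k}) > \ord_p(D_{m^{k-1}})$; it does \emph{not} by itself say $p \nmid D_{m^{k-1}}$. To get that implication you already need the valuation formula: if $p \nmid m$ and $p \mid D_{m^{k-1}}$, then $\ord_p(D_{m^k}) = \ord_p(D_{m^{k-1}}) + \ord_p(m) = \ord_p(D_{m^{k-1}})$, whence $p \nmid F_k^{(m)}$. Once you invoke that, the rank-of-apparition detour is doing no independent work; you might as well run the valuation argument directly for every prime.

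That is exactly what the paper does. The cited lemma of Silverman--Stange gives $\ord_p(D_{mn}) = \ord_p(m) + \ord_p(D_n)$ for \emph{every} prime $p$ with $p \mid D_n$, the sole exception being $p=2$ with $2 \mid m$ (plus a reduction-type condition). Since $m$ is odd, the formula holds uniformly --- including at $p=2$ and at primes of bad reduction --- so your step~(4) evaporates entirely. The paper then argues in one stroke: let $t$ be minimal with $p \mid D_{m^{t-1}}$; for every $s \geq t$ one has $\ord_p(F_s^{(m)}) = \ord_p(m)$, while for $s < t-1$ one has $\ord_p(F_s^{(m)}) = 0$. Hence for any two distinct indices the $p$-valuation of the gcd is at most $\ord_p(m)$, and $\gcd \mid m$ follows.

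In short: drop the good-prime/bad-prime split and the rank-of-apparition framing, and apply the valuation lemma uniformly from the start. The oddness hypothesis on $m$ is precisely what makes the lone exceptional case of that lemma irrelevant, which is why no separate bad-prime analysis is needed.
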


\begin{Theorem}[Order Universality]
\label{GEFN OU theorem}
Let $F=(F_{k}^{(m)})_{k\in\NN}$ be the sequence of generalized elliptic Fermat numbers for a fixed elliptic curve $E$, a rational point $P\in E(\QQ)$ and an odd integer $m \geq 1$. Then for all $N \in \NN$ satisfying $\gcd(N, 6\Delta(E)) = 1$,
$$P \textrm{ has order } m^k \textrm{ in } E(\ZZ/N\ZZ) \Longleftrightarrow N \mid {F_0}^{(m)} \cdots {F_k}^{(m)} \textrm{ and } N \nmid {F_0}^{(m)}\cdots {F_{k-1}}^{(m)}.$$ 
\end{Theorem}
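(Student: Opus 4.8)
The plan is to mimic the proof of the classical (and BDKRW) order-universality statement, tracking the prime~$m$ contributions carefully.  The key object is the reduction map $E(\QQ)\to E(\ZZ/N\ZZ)$, which makes sense because $\gcd(N,6\Delta(E))=1$ guarantees $E$ has good reduction at every prime dividing $N$ and that the reduction is given by the naive formulas on the minimal Weierstrass model.  The first step is to recast divisibility by $N$ of $D_n$ in group-theoretic terms: for a prime $p\nmid 6\Delta(E)$ and $p^e\parallel N$, one has $p^e\mid D_n$ if and only if $[n]P$ reduces to the identity in $E(\ZZ/p^e\ZZ)$, i.e.\ $n$ is a multiple of the order $\ell_{p^e}$ of $P$ in $E(\ZZ/p^e\ZZ)$.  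By CRT, $N\mid D_n \iff \lcm_{p^e\parallel N}\ell_{p^e}\mid n$; call this integer $\ell_N$, the order of $P$ in $E(\ZZ/N\ZZ)$.  This is exactly the ``rank of apparition'' formalism and it is where the hypothesis on $N$ is used.

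Next I would translate the product $F_0^{(m)}\cdots F_k^{(m)}$ back into a single EDS term.  Since $F_j^{(m)}=D_{m^j}/D_{m^{j-1}}$ for $j\ge 1$ and $F_0^{(m)}=D_1=1$, the telescoping product gives $F_0^{(m)}\cdots F_k^{(m)} = D_{m^k}$ as an equality of rationals; the only subtlety is that the $F_j^{(m)}$ need not be pairwise coprime, so I must argue that the $m$-part bookkeeping still works.  Concretely, combining with the previous paragraph: $N\mid F_0^{(m)}\cdots F_k^{(m)} = D_{m^k} \iff \ell_N\mid m^k$, and $N\nmid F_0^{(m)}\cdots F_{k-1}^{(m)} = D_{m^{k-1}} \iff \ell_N\nmid m^{k-1}$.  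The conjunction of these two says precisely that $\ell_N$ divides $m^k$ but not $m^{k-1}$; since $m$ is an \emph{odd} integer and $\ell_N$ divides a power of $m$, this forces $\ell_N$ to be exactly $m^k$ (write $\ell_N=m^a$ with $0\le a\le k$ from $\ell_N\mid m^k$... careful: $\ell_N\mid m^k$ only forces $\ell_N$ to be a product of primes dividing $m$, not literally $m^a$, unless $m$ is prime).  This is the point requiring the most care, and I expect it to be the main obstacle: when $m$ is not prime, ``$\ell_N\mid m^k$ and $\ell_N\nmid m^{k-1}$'' does not by itself say $\ell_N=m^k$.  So the statement as phrased ``$P$ has order $m^k$'' must be read with the understanding that one shows $\ell_N\mid m^k$, $\ell_N\nmid m^{k-1}$; if the intended claim is literally order $=m^k$, the proof must additionally invoke a structural fact (e.g.\ that for these EDS the apparition-ranks grow by the full factor $m$ at each step, which in turn uses that $D_{m^k}/D_{m^{k-1}}=F_k^{(m)}$ is genuinely nontrivial mod the relevant primes — parallel to BDKRW's ``order universality'' where $m=2$ makes this automatic).

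Assembling the argument: ($\Rightarrow$) If $P$ has order $m^k$ in $E(\ZZ/N\ZZ)$, then $\ell_N=m^k$, so $\ell_N\mid m^k$ giving $N\mid D_{m^k}=\prod_{j=0}^k F_j^{(m)}$, and $\ell_N=m^k\nmid m^{k-1}$ giving $N\nmid D_{m^{k-1}}=\prod_{j=0}^{k-1}F_j^{(m)}$.  ($\Leftarrow$) Conversely the two divisibility conditions give $\ell_N\mid m^k$ and $\ell_N\nmid m^{k-1}$; one then argues as above that $\ell_N=m^k$.  Throughout, the reduction-to-identity characterization of $p^e\mid D_n$ should be stated as a lemma (it is standard, cf.\ Silverman's work on EDS and the analogous lemma behind the BDKRW theorems), and the telescoping identity $\prod_{j=0}^k F_j^{(m)}=D_{m^k}$ should be recorded explicitly since it is what lets us avoid worrying about coprimality of the factors. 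The odd hypothesis on $m$ enters precisely to keep the prime $2$ (which can appear as a ``parasitic'' common factor by Theorem on coprimality, where $\gcd$ can be $2$) out of the order computation, and to ensure $\gcd(D_{m^k},\ldots)$ arguments are not muddied by the $2$-torsion behavior of the reduction maps.
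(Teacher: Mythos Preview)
Your approach is exactly the paper's: the paper's proof is a one-line reference, ``the proof is identical to that of Theorem~4 in [BDKRW],'' invoking the reduction homomorphism $E(\QQ)\to E(\ZZ/N\ZZ)$ and nothing more. Two small clean-ups: the telescoping identity $\prod_{j=0}^k F_j^{(m)} = D_{m^k}$ is an exact equality of positive integers (each $F_j^{(m)}\in\ZZ$ because $D_{m^{j-1}}\mid D_{m^j}$), so no ``$m$-part bookkeeping'' is needed; and your closing explanation of the odd-$m$ hypothesis is off --- nothing in this argument uses $m$ odd (BDKRW itself is the case $m=2$), the hypothesis is simply inherited from the coprimality theorem and plays no role here.

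More interestingly, you have spotted a genuine gap that the paper's proof glosses over. For composite $m$, the conditions $\ell_N\mid m^k$ and $\ell_N\nmid m^{k-1}$ do \emph{not} force $\ell_N=m^k$: take $m=15$, $k=1$, and any $N$ for which $P$ has order $5$ in $E(\ZZ/N\ZZ)$; then $N\mid D_{15}=F_0^{(15)}F_1^{(15)}$ and $N\nmid D_1=F_0^{(15)}$, yet the order is $5$, not $15$. So the $\Leftarrow$ direction, read literally, is false for composite $m$, and the paper's appeal to the BDKRW argument (where $m=2$ is prime, so $\ell_N\mid 2^k$ really does give $\ell_N=2^a$) does not transfer. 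The honest statement your argument proves is the biconditional with ``$P$ has order $m^k$'' replaced by ``the order of $P$ divides $m^k$ but not $m^{k-1}$''; for prime $m$ the two coincide.
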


We also have an analogous non-primality result for generalized elliptic Fermat numbers.

\begin{Theorem}
Let $E/\QQ$ be a minimal magnified elliptic curve with a fixed point $P\in E(\QQ)$ having a (nonzero) degree $d$ isogeny $\phi: E'\rightarrow E$ from a minimal elliptic curve $E'/\QQ$ satisfying $\phi(P')=P$. For $m$ relatively prime to $d$, $F^{(m)}_{k}(E,P)$ are composite for sufficiently large $k$.
\end{Theorem}

\subsection{Acknowledgements}
We would like to thank the 2017 Wake Forest REU research group, whose work in defining and proving properties of elliptic Fermat numbers inspired our investigation of generalized elliptic Fermat numbers. We are also grateful to Professor Joe Silverman and Yuwei Zhu for their helpful advising.





\section{Non-primality Conjecture for Elliptic Fermat Numbers}

Everest, Miller, and Stephens \cite{EMS} proved the following conjecture for {\it magnified} elliptic divisibility sequences.

\begin{Conjecture}[Primality Conjecture]
Let $D=(D_{n})_{n\in\NN}$ be an elliptic divisibility sequence. Then $D$ contains only finitely many prime terms.
\end{Conjecture}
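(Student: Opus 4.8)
The final statement as worded is the Primality Conjecture: an arbitrary elliptic divisibility sequence $D=(D_n)_{n\in\NN}$ contains only finitely many prime terms. The key input is the theory of canonical heights and the growth of EDS terms. The plan is to combine the quadratic growth of $\log D_n$ with a lower bound on the number of distinct primitive prime divisors appearing in the sequence, forcing terms to factor once $n$ is large enough.

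First I would recall the fundamental growth estimate: writing $\hat{h}(P)$ for the canonical height of $P$, one has $\log D_n = \hat{h}(P)\,n^2 + O(1)$ as $n\to\infty$, which follows from the relation between the canonical height and the local height contributions at the archimedean and non-archimedean places, together with the fact that $[n]P$ has denominator $D_n^2$ in the first coordinate. In particular $\log D_n / n^2 \to \hat{h}(P) > 0$ since $P$ is nontorsion. The second ingredient is the divisibility property of an EDS: if $d \mid n$ then $D_d \mid D_n$, so whenever $n$ has a nontrivial factorization $n = de$ with $1 < d < n$, the term $D_d$ is a proper factor of $D_n$. Thus for any composite index $n$ with a proper divisor $d$ satisfying $D_d > 1$, the term $D_n$ is automatically composite as an integer.

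The heart of the argument is therefore to control the \emph{prime} indices and to show that even at prime indices $D_p$ is eventually composite as an integer. For this I would invoke the theory of primitive divisors (the elliptic analogue of Zsygmondy's theorem, due to Silverman): for all sufficiently large $n$, the term $D_n$ possesses a primitive prime divisor, i.e. a prime $\ell$ dividing $D_n$ but not $D_m$ for any $m<n$. Combining primitivity with the growth estimate, one compares the size of the primitive part against the full term: the contribution of any single primitive prime divisor is bounded in a way controlled by local heights, whereas $\log D_n$ grows like $\hat h(P) n^2$. The strategy is to show that for large $n$ the term $D_n$ must carry strictly more than one prime factor — roughly, that $D_n$ cannot equal a prime power because its primitive part is too small relative to $\hat h(P)\,n^2$, so the quotient $D_n/(\text{primitive part})$ exceeds $1$ and supplies a second factor from an earlier term $D_m$, $m<n$.

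The main obstacle is making the two-prime-factor conclusion uniform and effective enough to handle every index, including primes $p$ where no proper-divisor factorization is available. This requires quantifying the primitive divisor theorem: one needs a bound showing that the primitive part of $D_n$ has $\log$ which is $o(n^2)$, or at least strictly smaller than $\log D_n$ for all large $n$, so that a nonprimitive factor necessarily remains. I expect this step to rely on bounding $\sum_{\ell \text{ primitive}} \ord_\ell(D_n)\log\ell$ via local height estimates and the geometry of the formal group, and it is precisely the step that Everest, Miller, and Stephens carry out carefully in \cite{EMS} for the magnified case; the delicate point is ensuring the estimate is strong enough that the residual (nonprimitive) part is never forced to be a unit. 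Once that quantitative gap between the primitive part and the full term is established, finiteness of prime terms follows, since only the finitely many small indices can evade both the divisibility obstruction and the size obstruction.
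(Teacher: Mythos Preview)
The statement you are trying to prove is labeled a \emph{Conjecture} in the paper, not a theorem; the paper does not prove it and offers no proof to compare against. The surrounding text says only that Everest, Miller, and Stephens established it in the \emph{magnified} case. So at the meta-level you are attempting to prove an open problem.

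More concretely, your strategy has a genuine gap at prime indices. You propose to show that the primitive part of $D_n$ has logarithm $o(n^2)$, or at least strictly smaller than $\log D_n$, so that a nonprimitive cofactor survives and furnishes a second prime. But when $n=p$ is prime and the sequence is normalized with $D_1=1$, the only proper divisor of $p$ is $1$, so every prime dividing $D_p$ is primitive by definition: the nonprimitive part is exactly $1$. There is nothing to compare against, and no amount of height bounding can produce a second factor from within the sequence. Your own caveat (``ensuring the residual nonprimitive part is never forced to be a unit'') is precisely the obstruction, and it is not a delicate point but a structural one that your outline cannot overcome.

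The reason the magnified case succeeds is different from what you sketch: the extra factor does not come from the divisibility structure of $(D_n)$ itself, but from a \emph{second} sequence $(D'_n)$ attached to the isogenous pair $(E',P')$. One shows $D'_n \mid D_n$ for large $n$, and since $\hat h(P) = d\,\hat h(P')$ with $d>1$, the logarithms satisfy $\log D'_n \sim (1/d)\log D_n$, so both $D'_n$ and $D_n/D'_n$ tend to infinity. Without an isogeny supplying this external divisor, no such splitting is available, which is exactly why the general statement remains conjectural.
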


Note that for the Fibonacci sequence, it is conjectured that primes occur infinitely many times. In this section, we will prove the following conjecture for magnified elliptic Fermat numbers.

\begin{Conjecture}[Primality Conjecture for elliptic Fermat numbers]
Let $F=(F_{n})_{n\in\NN}$ be the sequence of elliptic Fermat numbers for a fixed elliptic curve $E$ and a rational point $P\in E(\QQ)$. Then $F$ contains only finitely many prime terms.
\end{Conjecture}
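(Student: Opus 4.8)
The plan is to reduce the primality of $F_{k}=F_{k}(E,P)=D_{2^{k}}/D_{2^{k-1}}$ to the primality of a single \emph{primitive part} of the underlying EDS, and then to argue against that primitive part being a prime infinitely often. First I would record the telescoping identity $F_{0}F_{1}\cdots F_{k}=D_{2^{k}}$ (immediate from $D_{1}=1$) together with the standard valuation structure of an EDS: for a prime $p$ of good reduction with rank of apparition $r$ (the least index with $p\mid D_{r}$), one has $p\mid D_{n}$ iff $r\mid n$, and then $\ord_{p}(D_{n})=\ord_{p}(D_{r})+\ord_{p}(n/r)$. Feeding $n=2^{k}$ and $n=2^{k-1}$ into this formula shows that for every odd $p$ of good reduction, $\ord_{p}(F_{k})=\ord_{p}(D_{2^{k}})-\ord_{p}(D_{2^{k-1}})$ vanishes unless the rank of apparition of $p$ is exactly $2^{k}$. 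Hence, up to a factor of at most $2$ and a bounded factor supported on the finitely many primes dividing $6\Delta(E)$, the number $F_{k}$ equals the primitive part $\Phi_{2^{k}}$ of $D_{2^{k}}$, i.e. the product of the primes whose rank of apparition is exactly $2^{k}$. In particular, once $F_{k}$ is large, $F_{k}$ can be prime only if $\Phi_{2^{k}}$ is a single prime.

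Second, I would quantify the size. The canonical height gives $\log D_{n}=c\,n^{2}+o(n^{2})$ with $c$ a positive multiple of $\hat{h}(P)$, so $\log F_{k}=\log D_{2^{k}}-\log D_{2^{k-1}}\sim \tfrac{3}{4}\,c\,4^{k}$ grows doubly exponentially in $k$; in particular $F_{k}\to\infty$ and $\Phi_{2^{k}}>1$ for all large $k$, the latter also guaranteed by Silverman's theorem that $D_{n}$ has a primitive prime divisor once $n$ is large. Combining this with the first step, the conjecture is equivalent to: the primitive part $\Phi_{2^{k}}$, of size $\exp(\tfrac{3}{4}c\,4^{k}+o(4^{k}))$, is prime for only finitely many $k$. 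The most natural route would be to prove that $D_{2^{k}}$ carries at least two distinct primitive prime divisors for all sufficiently large $k$, since then $\Phi_{2^{k}}$, being a product of at least two primes, would be composite.

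The main obstacle is precisely this last step, and it is what separates the general conjecture from the magnified theorem of Section~3. Silverman's theorem yields one primitive divisor but never two, and for a generic pair $(E,P)$ there is no algebraic identity forcing a second factor; the situation is the exact elliptic analogue of the classical Fermat numbers $2^{2^{n}}+1$, whose (conjecturally finite) primality is open for the same reason. The magnification hypothesis $P=\phi(P')$ used in Section~3 supplies exactly such a factorization of $D_{n}$ into a contribution pulled back from $E'$ and a complementary contribution, which provides the missing second divisor; absent an isogeny there is no analogous splitting. Unconditionally one can only offer the heuristic that $\Phi_{2^{k}}$ behaves like a ``random'' integer of its size, so its chance of being prime is $\asymp 1/\log\Phi_{2^{k}}\asymp 4^{-k}$, and $\sum_{k}4^{-k}<\infty$ forces finitely many prime terms by a Borel--Cantelli count. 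Promoting this heuristic to a proof for arbitrary $(E,P)$ would require either an unconditional lower bound of two on the number of primitive prime divisors of $D_{2^{k}}$ (false in general, as the Mersenne phenomenon shows) or sieve input strong enough to exclude primality of such sparse, rapidly growing terms; neither seems within reach of current methods, which is why I would state the assertion as a conjecture and prove only the magnified case.
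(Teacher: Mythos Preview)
Your analysis is accurate, but note that the statement in question is labelled a \emph{Conjecture} in the paper and is not proved there in any generality; the paper simply states it and immediately passes to the magnified setting, where the isogeny $\phi$ supplies the extra divisor via $F_{k}(E',P')\mid F_{k}(E,P)$ together with the height comparison $\hat h(P)=d\,\hat h(P')$. So there is no ``paper's own proof'' to compare your proposal against.

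What you have written is not a proof either, and you correctly recognise this: you reduce the question to whether $\Phi_{2^{k}}$ has at least two primitive prime divisors for large $k$, give the Borel--Cantelli heuristic, and explain that the magnification hypothesis is precisely what manufactures the second factor. This is exactly the right diagnosis, and in fact your discussion is more detailed than the paper's, which offers no heuristic justification for the conjecture at all. Your concluding sentence---state it as a conjecture and prove only the magnified case---matches the paper's treatment.
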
 

First, we need the following result from \cite{BDKRW}:

\begin{Corollary}
\label{Cor1}
For any odd prime $p\nmid 6\Delta(E)$, $P$ has order $2^{k}$ in $E(\FF_{p})$ if and only if $p\mid F_{k}(E,P)$.
\end{Corollary}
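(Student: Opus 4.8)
The plan is to obtain Corollary~\ref{Cor1} as a direct specialization of the order universality theorem of \cite{BDKRW} (restated above as Theorem~[BDKRW]) combined with the coprimality theorem of \cite{BDKRW}, so the proof should be short.

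First I would apply the order universality theorem with $N=p$. Since $p$ is an odd prime with $p\nmid 6\Delta(E)$, we have $\gcd(p,6\Delta(E))=1$ and $E(\ZZ/p\ZZ)=E(\FF_{p})$; the theorem then says that $P$ has order $2^{k}$ in $E(\FF_{p})$ if and only if $p\mid F_{0}(E,P)\cdots F_{k}(E,P)$ and $p\nmid F_{0}(E,P)\cdots F_{k-1}(E,P)$. Because $p$ is prime, it divides a product of integers precisely when it divides one of the factors, so these two conditions taken together are equivalent to: $p\mid F_{k}(E,P)$ and $p\nmid F_{j}(E,P)$ for every $0\le j<k$.

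Next I would invoke the coprimality theorem to show the negative clause is automatic. If the odd prime $p$ divided both $F_{k}(E,P)$ and $F_{j}(E,P)$ for some $j\ne k$, then $p$ would divide $\gcd(F_{k}(E,P),F_{j}(E,P))\in\{1,2\}$, forcing $p\le 2$, a contradiction. Hence $p\mid F_{k}(E,P)$ by itself already implies $p\nmid F_{j}(E,P)$ for all $j\ne k$, so the condition above collapses to $p\mid F_{k}(E,P)$, which is exactly the claim.

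I do not anticipate a genuine obstacle here: the entire content is carried by the two cited theorems, and the only ``idea'' in the argument is recognizing that primality of $p$ turns the product divisibility conditions into statements about a single factor, while coprimality renders the negative condition redundant. The only points needing a word of care are the harmless empty-product convention in the boundary case $k=0$ (where $F_{0}(E,P)=D_{1}=1$ under the standing normalization, consistent with $P$ reducing to a finite nonzero point modulo $p$) and the fact that we are implicitly using a good-reduction prime so that reduction modulo $p$ is defined on $E(\QQ)$.
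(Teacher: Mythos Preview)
Your proposal is correct and matches the approach taken in \cite{BDKRW}: the paper itself does not prove Corollary~\ref{Cor1} but simply imports it from \cite{BDKRW}, and the argument there (as reproduced in the paper's proof of the generalized Corollary~\ref{GEFN OU cor}) is exactly the one you outline---apply order universality with $N=p$, use primality of $p$ to pass from products to single factors, and invoke coprimality to eliminate the negative clause. There is nothing to add or correct.
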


Using the above corollary, we can prove the following result.

\begin{Lemma}
\label{Lemma2}
Let $E/\QQ$ be a minimal magnified elliptic curve with a fixed point $P\in E(\QQ)$ having a (nonzero) odd-degree $d$ isogeny $\phi: E'\rightarrow E$ from a minimal elliptic curve $E'/\QQ$ satisfying $\phi(P')=P$. For sufficiently large $k$, we have
$$\gcd(F_{k}(E',P'),F_{k}(E,P))\neq 1.$$
\end{Lemma}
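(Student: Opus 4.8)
The plan is to exploit the isogeny $\phi\colon E'\to E$ to compare the two elliptic Fermat sequences prime-by-prime via Corollary~\ref{Cor1}. The key observation is that if $p\nmid 6\Delta(E)\Delta(E')$ is an odd prime of good reduction for both curves, then $\phi$ reduces to an isogeny $\bar\phi\colon E'(\FF_p)\to E(\FF_p)$ carrying $\bar P'$ to $\bar P$, and since $\deg\phi = d$ is odd, $\bar\phi$ induces an isomorphism on the $2$-primary parts of these groups. Consequently the order of $\bar P$ and the order of $\bar P'$ have the same $2$-adic valuation, so $p\mid F_k(E,P)$ if and only if $p\mid F_k(E',P')$. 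Thus the two Fermat numbers $F_k(E,P)$ and $F_k(E',P')$ have exactly the same odd prime divisors outside a fixed finite bad set $S$ (the primes dividing $6\Delta(E)\Delta(E')$, together with possible ramification primes of $\phi$).

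Next I would use a growth/size argument to force a genuine common factor. By the theory of elliptic divisibility sequences (Siegel-type estimates, as in the work of Everest--Miller--Stephens and Stange), $\log D_n \sim \hat h(P)\, n^2$, and hence $\log F_k(E,P) = \log(D_{2^k}/D_{2^{k-1}}) \sim \tfrac34 \hat h(P)\, 4^k$ grows without bound; the same holds for $F_k(E',P')$. So for large $k$ both $F_k(E,P)$ and $F_k(E',P')$ are strictly larger than the product of prime powers supported on the finite bad set $S$ that could divide them — more precisely, one controls the $S$-part of each term (it is bounded, since an EDS has bounded denominators of $S$-contribution, or one simply notes the $p$-adic valuations at $p\in S$ grow at most linearly while the term grows quadratically in the exponent). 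Hence for $k$ large, each of $F_k(E,P)$ and $F_k(E',P')$ must be divisible by some prime $p\notin S$, and by the previous paragraph that prime divides the other term as well. Therefore $\gcd(F_k(E',P'),F_k(E,P))\neq 1$ for all sufficiently large $k$.

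To organize this cleanly I would: (1) record that $\phi$ has good reduction away from a finite set $S\supseteq\{p : p\mid 6\Delta(E)\Delta(E')\}$ and that $\bar\phi$ is surjective with kernel of odd order $d$ for $p\notin S$; (2) deduce $v_2(\ord \bar P) = v_2(\ord \bar P')$ for all $p\notin S$, hence by Corollary~\ref{Cor1} that $F_k(E,P)$ and $F_k(E',P')$ share all their prime divisors outside $S$; (3) bound the $S$-part of $F_k(E,P)$ and of $F_k(E',P')$ — e.g., $v_p(D_n) \le v_p(D_{p^{a}}) + \log_p n$-type bounds, or cite that the primitive part of an EDS eventually dominates — to conclude each term has a prime factor outside $S$ once $k$ is large; (4) combine (2) and (3).

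The main obstacle will be step~(3): making rigorous that each of $F_k(E,P)$ and $F_k(E',P')$ is \emph{not} a unit times a product of primes from the finite bad set $S$. This requires either a Zsygmondy/primitive-divisor statement for elliptic divisibility sequences (so that $D_{2^k}/D_{2^{k-1}}$ has a primitive prime divisor for large $k$, automatically outside $S$) or an explicit comparison of the quadratic growth $\log|F_k| \gg 4^k$ against a linear-in-$k$ bound for $\sum_{p\in S} v_p(F_k)\log p$. The primitive divisor theorem for EDS (due to Silverman, with effective refinements) is exactly what is needed here, and invoking it makes step~(3) — and hence the whole lemma — go through; one must only check that a primitive divisor of $D_{2^k}$ is coprime to $6\Delta(E)$, which holds for all $k$ beyond an explicit bound.
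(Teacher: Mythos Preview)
Your approach is essentially the same as the paper's: both use that the odd-degree isogeny $\phi$ (via the dual $\hat\phi$ and the relation $\hat\phi\circ\phi = [d]$) preserves the $2$-power order of points modulo primes of good reduction, so by Corollary~\ref{Cor1} the prime divisors of $F_k(E,P)$ and $F_k(E',P')$ outside a fixed finite bad set $S$ coincide. Your treatment of step~(3) --- ensuring that for large $k$ each $F_k$ actually has a prime divisor outside $S$, via growth estimates or the primitive divisor theorem for EDS --- is in fact more careful than the paper's proof, which simply writes ``without loss of generality $p\notin S$'' and leaves this point implicit in the phrase ``for sufficiently large $k$.''
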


\begin{proof}
Let $p$ be a fixed prime which divides $F_{k}(E',P')$. Let $S$ be the set of primes for which $E$, $E'$, and $\phi$ cannot give an isogeny modulo $p$. Note that $S$ is a finite set. Without loss of generality, we can assume $p\notin S$. From Corollary \ref{Cor1}, it is sufficient to prove that the isogeny $\phi$ reduction modulo $p$ (which is again an isogeny) preserves the order $2^{k}$ of $P'$
$$\phi:E'(\FF_{p})\rightarrow E(\FF_{p}).$$
We consider the dual isogeny $\hat{\phi}$ of $\phi$. We know
$$\hat{\phi}\circ \phi=[d]$$
where $[d]$ is the multiplication-by-$d$ map on $E'$. Then the map
$$[d]: E'(\FF_{p})\xrightarrow{\phi} E(\FF_{p})\xrightarrow{\hat{\phi}} E'(\FF_{p})$$
preserves the order of $2^{k}$ of $P'$ and so does $\phi$.    
\end{proof}

\begin{Proposition}
\label{Corollary2}
Let $E/\QQ$ be a minimal magnified elliptic curve with a fixed point $P\in E(\QQ)$ having a (nonzero) odd-degree isogeny $\phi: E'\rightarrow E$ from a minimal elliptic curve $E'/\QQ$ satisfying $\phi(P')=P$. For sufficiently large $k$, we actually have the following divisibility:
$$F_{k}(E',P')\mid F_{k}(E,P).$$
\end{Proposition}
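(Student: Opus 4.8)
The plan is to upgrade the non-coprimality statement of Lemma \ref{Lemma2} to an actual divisibility by showing that \emph{every} prime $p$ dividing $F_k(E',P')$ (for $k$ large) also divides $F_k(E,P)$, and then checking that the $p$-adic valuations match. First I would dispose of the finitely many bad primes: let $S$ be the finite set of primes at which $E$, $E'$, or $\phi$ has bad reduction or at which $\phi$ fails to reduce to an isogeny, together with the primes dividing $6\Delta(E)6\Delta(E')$; for $k$ large enough the terms $F_k(E',P')$ and $F_k(E,P)$ are coprime to every prime in $S$ (since $D_{m^k}/D_{m^{k-1}}$-type quotients grow and their prime factors are governed by reduction mod $p$), so we may restrict attention to $p\notin S$. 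For such $p$, Corollary \ref{Cor1} says $p\mid F_k(E',P')$ iff $P'$ has order $2^k$ in $E'(\FF_p)$, and the argument in Lemma \ref{Lemma2} (using $\hat\phi\circ\phi=[d]$ with $d$ odd, so that $\phi$ is injective on the $2$-power torsion part and hence preserves the exact $2$-power order) shows $P=\phi(P')$ then has order $2^k$ in $E(\FF_p)$, whence $p\mid F_k(E,P)$ again by Corollary \ref{Cor1}. This gives containment of prime divisors.

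Next I would handle the multiplicities. Here I would use the elliptic-divisibility-sequence refinement of Corollary \ref{Cor1}: for $p\notin S$ the $p$-adic valuation $\ord_p(F_k(E,P))$ equals $\ord_p$ of the denominator index, i.e. it is controlled by how far into the formal group the point $[2^k]P$ has fallen relative to $[2^{k-1}]P$. Concretely, writing $D_n(E,P)$ and $D_n(E',P')$ for the two EDS, the reduced isogeny $\phi_p:E'(\QQ_p)\to E(\QQ_p)$ carries the formal group $\widehat{E'}$ into $\widehat{E}$, and since $\deg\phi=d$ is prime to $p$ this map is an isomorphism of formal groups up to a unit scaling of the parameter; therefore $\ord_p\big(D_{2^k}(E',P')\big)\le \ord_p\big(D_{2^k}(E,P)\big)$ and likewise the subtracted terms at index $2^{k-1}$ match, giving $\ord_p(F_k(E',P'))\le \ord_p(F_k(E,P))$ for every $p\notin S$. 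Combined with the first paragraph (all primes of $F_k(E',P')$ lie outside $S$ for $k$ large) this yields $F_k(E',P')\mid F_k(E,P)$.

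The main obstacle I expect is the valuation comparison rather than the set-theoretic divisibility of prime supports: one must be careful that the isogeny, though an isomorphism on formal groups because $p\nmid d$, may still shift the exact level at which a point enters the kernel of reduction by a bounded amount coming from the leading coefficient of $\phi$ on formal groups and from the minimality (or non-minimality) of the chosen Weierstrass models of $E$ and $E'$. I would absorb any such bounded discrepancy into enlarging $S$ and into the ``sufficiently large $k$'' hypothesis — since $\ord_p(F_k)\to\infty$ along the sequence while the discrepancy stays bounded, for large $k$ the inequality $\ord_p(F_k(E',P'))\le\ord_p(F_k(E,P))$ survives. An alternative, cleaner route that avoids formal-group bookkeeping is to invoke the analogue of Corollary \ref{Cor1} with prime powers: $p^j\mid F_k(E,P)$ iff $P$ has order $2^k$ in $E(\ZZ/p^j\ZZ)$ (this follows from Theorem \ref{GEFN OU theorem}-type order-universality at $m=2$), and then run the dual-isogeny argument verbatim over $\ZZ/p^j\ZZ$ instead of $\FF_p$, which directly gives $\ord_p(F_k(E',P'))\le\ord_p(F_k(E,P))$ and hence the divisibility.
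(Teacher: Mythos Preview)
Your approach is essentially the paper's: the paper's entire proof of this proposition is the single sentence ``The proof of Lemma~\ref{Lemma2} essentially implies the result,'' and your first paragraph is exactly that argument written out (dual isogeny $\hat\phi\circ\phi=[d]$ with $d$ odd preserves the exact $2$-power order, so Corollary~\ref{Cor1} transfers prime divisors of $F_k(E',P')$ to $F_k(E,P)$). You go further than the paper in worrying about multiplicities, which the paper simply does not address; your alternative route via order universality over $\ZZ/p^j\ZZ$ is the clean way to do this and is correct.

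One slip to fix: your claim that ``$\ord_p(F_k)\to\infty$ along the sequence'' is false --- by the coprimality result an odd prime $p$ divides at most one $F_k$ --- so the ``absorb a bounded discrepancy because the valuations eventually dominate'' maneuver in your third paragraph does not work as written. Either drop that line and rely on the $\ZZ/p^j\ZZ$ argument, or argue directly that for $p\nmid d$ and $p\notin S$ the isogeny induces an isomorphism of formal groups with unit leading coefficient, giving the exact equality $\ord_p\!\big(D_n(E',P')\big)=\ord_p\!\big(D_n(E,P')\big)$ for all $n$ once $p\mid D_n$, with no discrepancy to absorb.
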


\begin{proof}
The proof of Lemma \ref{Lemma2} essentially implies the result.
\end{proof}

\begin{Definition}
Let $E/\QQ$ be an elliptic curve with a point $P\in E(\QQ)$, denote $P$ as $P=\left(\frac{A}{D^{2}},\frac{B}{D^{3}}\right)$
We define {\it the height} of a point $h(P)$ by using its $x$-coordinate:
$$h(P)=\log(\max(|A|,D^{2})).$$
Moreover, we define {\it the canonical height} of a point $\hat{h}(P)$ by
$$\hat{h}(P)=\lim_{k\to\infty}\frac{h(2^{k}P)}{4^{k}}.$$
\end{Definition}

\begin{Remark}
\label{remark1}
Note that when we have $[l]P=\left(\frac{A_{l}}{D_{l}^{2}},\frac{B_{l}}{D_{l}^{3}}\right)$ with $\gcd(A_{l},D_{l})=1$, then
$$\lim_{l\to\infty}\frac{\log(D_{l}^{2})}{l^{2}}=\lim_{l\to\infty}\frac{|A_{l}|}{l^{2}}=\hat{h}(P).$$
For instance, if we choose $l=3^{k}$ for some $k$, we can also represent
$$\hat{h}(P)=\lim_{k\to\infty}\frac{\log(D_{3^{k}}^{2})}{9^{k}}=\lim_{k\to\infty}\frac{|A_{3^{k}}|}{9^{k}}.$$
This observation will be useful in Section 3.2.
\end{Remark}

Along with the above Corollary, let's recall the following theorem from \cite{BDKRW}.

\begin{Theorem}
\label{Theorem1}
Let $E/\QQ$ be an elliptic curve with a fixed point $P\in E(\QQ).$ If $\hat{h}(P)$ denotes the canonical height of $P$, we get
$$\lim_{k\to\infty}\frac{\log(F_{k}(E,P))}{4^{k}}=\frac{3}{8}\hat{h}(P)$$.
\end{Theorem}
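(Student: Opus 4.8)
The plan is to reduce the statement to the asymptotic growth rate of the underlying divisibility sequence recorded in Remark \ref{remark1}, and then to extract the constant $\tfrac38$ purely from the arithmetic of the indices $2^{k}$ and $2^{k-1}$.

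First I would restate the input from Remark \ref{remark1} in the form I need. Since $\lim_{l\to\infty}\frac{\log(D_{l}^{2})}{l^{2}}=\hat{h}(P)$, halving gives $\lim_{l\to\infty}\frac{\log D_{l}}{l^{2}}=\tfrac12\hat{h}(P)$. Because $2^{k-1}\mid 2^{k}$, the divisibility property of elliptic divisibility sequences yields $D_{2^{k-1}}\mid D_{2^{k}}$, so each $F_{k}(E,P)=D_{2^{k}}/D_{2^{k-1}}$ is a positive integer and, in particular, $\log F_{k}(E,P)=\log D_{2^{k}}-\log D_{2^{k-1}}$ for every $k\geq 1$.

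Next I would evaluate the two pieces after dividing by $4^{k}$, using the above limit along the two subsequences of indices that appear. Taking $l=2^{k}$, for which $l^{2}=4^{k}$, gives $\frac{\log D_{2^{k}}}{4^{k}}\to\tfrac12\hat{h}(P)$. Taking $l=2^{k-1}$, for which $l^{2}=4^{k-1}$, gives $\frac{\log D_{2^{k-1}}}{4^{k-1}}\to\tfrac12\hat{h}(P)$, and hence $\frac{\log D_{2^{k-1}}}{4^{k}}=\tfrac14\cdot\frac{\log D_{2^{k-1}}}{4^{k-1}}\to\tfrac18\hat{h}(P)$. Since each normalized piece converges separately, the limit of the difference is the difference of the limits, so
$$\lim_{k\to\infty}\frac{\log F_{k}(E,P)}{4^{k}}=\tfrac12\hat{h}(P)-\tfrac18\hat{h}(P)=\tfrac38\hat{h}(P),$$
which is exactly the assertion of Theorem \ref{Theorem1}.

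Once Remark \ref{remark1} is available, the final step is essentially bookkeeping, so I do not expect a serious obstacle there; the only points requiring care are normalizing each term by the correct power ($4^{k}$ versus $4^{k-1}$) and noting that the cancellation of two individually divergent quantities $\log D_{2^{k}}$ and $\log D_{2^{k-1}}$ is legitimate precisely because both become convergent after division by $4^{k}$. The genuine content lies inside Remark \ref{remark1}, namely that $\log D_{l}$ grows like $\tfrac12\hat{h}(P)\,l^{2}$; this is the standard comparison between the naive height $h([l]P)=\log\max(|A_{l}|,D_{l}^{2})$ and the canonical height $\hat{h}(P)$, and if one wished to make the present argument self-contained, the one thing to justify is that the denominator $D_{l}$ controls the numerator $A_{l}$ so that $h([l]P)$ and $\log D_{l}^{2}$ share the same leading asymptotics.
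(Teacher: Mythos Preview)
Your proposal is correct and follows essentially the same approach as the paper: the paper's own proof of Theorem \ref{Theorem1} simply cites \cite[Theorem 10]{BDKRW}, but the explicit computation you give---splitting $\log F_{k}=\log D_{2^{k}}-\log D_{2^{k-1}}$, normalizing by $4^{k}$, and invoking Remark \ref{remark1} along the subsequences $l=2^{k}$ and $l=2^{k-1}$---is exactly the argument the paper writes out for the general-$m$ version in Theorem \ref{Theorem2}, specialized to $m=2$.
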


\begin{proof}
The result follows directly from Defintion \ref{Definition2}. See \cite[Theorem 10]{BDKRW}.
\end{proof}

Using Corollary \ref{Corollary2} and Theorem \ref{Theorem1} , we can prove the Primality conjecture for magnified elliptic Fermat numbers.

\begin{Theorem}
Let $E/\QQ$ be a minimal magnified elliptic curve with a fixed point $P\in E(\QQ)$ having a (nonzero) odd-degree isogeny $\phi: E'\rightarrow E$ from a minimal elliptic curve $E'/\QQ$ satisfying $\phi(P')=P$. Then the terms $F_{k}(E,P)$ are composite for sufficiently large $k$.
\end{Theorem}

\begin{proof}
Using Siegel's Theorem, we know
$$\hat{h}(P)=m\hat{h}(P'),$$
where $m$ is the degree of the given isogeny $\phi$. Therefore, for sufficiently large $k$, there is a prime divisor which is a proper divisor of $F_{k}(E,P)$ by Theorem \ref{Theorem1}.
\end{proof}

\begin{Example}
We can see the divisibility of corresponding elliptic Fermat numbers using the following magnified elliptic divisibility sequence, which has a degree $3$ isogeny $\phi$ that maps
$$E_{1}':y^{2}=x^{3}-9x+9,~~~\text{with}~P'=[1,1]$$
to
$$E_{1}:y^{2}=x^{3}-189x-999,~~~\text{with}~P=[-8,1].$$
Then we get the following factorizations, and we can check the divisibility of corresponding elliptic Fermat numbers.\\
\\
\begin{tabular}{l|l}
\def\arraystretch{2}
{$F_{1}(E_{1}',P')=1$} & {$F_{1}(E_{1},P)=2$}\\ \\
{$F_{2}(E_{1}',P')= 17$} & {$F_{2}(E_{1}, P) = 2 * 17 * 19$}\\ \\
{$F_{3}(E_{1}',P')=53 * 127$} & {$F_{3}(E_{1},P) =2 * 53 * 127 * 10799 * 14867$}\\ \\
{$F_{4}(E_{1}',P')=89 * 179 * 307$} & {$F_{4}(E_{1}, P) =2 * 89 * 179 * 307 * 757 * 5813 *67211 $}\\
{$~~~~~~~~~~~~~~~~~~ * 5813 * 838133$} & {$~~~~~~~~~~~~~~~~~* 838133 * 265666679 * 3205176128020873$}\\

{$~~~~~~~~\vdots$} & {$~~~~~~~~\vdots$}
\def\arraystretch{1}
\end{tabular}
\\
\\
Similarly, for a degree $7$ isogeny which maps
$$E_{2}':y^{2}+xy=x^{3}-x^{2}+x+1,~~~\text{with}~Q'=[0,1]$$
to
$$E_{2}:y^{2}+xy=x^{3}-x^{2}-389x-2859,~~~\text{with}~Q=[26,51],$$
we have\\
\\
\begin{tabular}{l|l}
\def\arraystretch{2}
{$F_{1}(E_{2}',Q')=1$} & {$F_{1}(E_{2},Q)=1$}\\ \\
{$F_{2}(E_{2}',Q')=3$} & {$F_{2}(E_{2},Q) = 3 * 701$}\\ \\
{$F_{3}(E_{2}',Q')=11$} & {$F_{3}(E_{2},Q)= 11* 233 * 2887 * 273001$}\\ \\
{$F_{4}(E_{2}',Q')=1523 * 15443$} & {$F_{4}(E_{2}, Q) = 103 * 131 * 311 * 467 * 1523 * 11831 $}\\
{} & {$~~~~~~~~~~~~~~~~~~ * 15443* 12539851 * 7015932452763098743789$}\\
{$~~~~~~~~\vdots$} & {$~~~~~~~~\vdots$}
\def\arraystretch{1}
\end{tabular}

\end{Example}

\section{Generalized elliptic Fermat numbers and their properties}
\subsection{Coprimality and order universality}

The authors of \cite{BDKRW}, motivated by the coprimality of the classical Fermat numbers, show that any two distinct elliptic Fermat numbers are either relatively prime, or else their only common factor is 2. In this section, we prove an analogous result for generalized elliptic Fermat numbers generated by odd $m$. We also generalize the order universality properties stated in \cite[Theorem 4]{BDKRW} and \cite[Corollary 5]{BDKRW}.

Before proving any results, we present an example of a sequence of generalized elliptic Fermat numbers generated by $m = 3$:

\begin{Example}
\label{GEFN ex}
Let $E : y^2 = x^3 + x^2 -4x$, $P = (-2,2)$ and $m = 3$. The first four generalized elliptic Fermat numbers are listed below.\\

\begin{tabular}{l|l}
\def\arraystretch{2}
{$P = (\frac{-2}{1^2},\frac{2}{1^3})$} & {$F_0^{(3)}(E, P) = 1$}\\ \\
{$3P = (\frac{-2}{3^2},\frac{26}{3^3})$} & {$F_{1}^{(3)}(E,P) = \frac{3}{1} = 3$} \\ \\
{$9P = (\frac{-213293858}{10593^2}, \frac{2478721052834}{10593^3})$} & {$F_2^{(3)}(E, P) = \frac{10593}{3} = 3531$}\\ \\
{$27P = (\frac{-2387\ldots4098}{4777\ldots2659^2},\frac{7135\ldots8638}{4777\ldots2659^3}$)} & 
{$F_3^{(3)}(E, P) = \frac{4777\ldots2659}{10593} = 4509\ldots2163$ (33 digits)}\\
\def\arraystretch{1}
\end{tabular}

\end{Example}

We will now prove the following "coprimality" theorem, which states that the gcd of any two generalized elliptic Fermat numbers generated by an odd integer $m$ must divide $m$:

\begin{Theorem}[Coprimality]
Let $F=(F_{k}^{(m)}(E,P))_{k\in\NN}$ be the sequence of generalized elliptic Fermat numbers for a fixed elliptic curve $E$, a rational point $P\in E(\QQ)$ and an odd integer $m \geq 1$. Then for all distinct $k, \ell \geq 0$, 
$$\gcd({F_k}^{(m)}(E,P),{F_\ell}^{(m)}(E,P)) \mid m.$$
\end{Theorem}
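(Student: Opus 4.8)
The plan is to mimic the classical argument for coprimality of Fermat numbers, translated into the language of elliptic divisibility sequences. Recall that for classical Fermat numbers one shows $F_k \mid F_\ell - 2$ whenever $k < \ell$, so any common prime divisor must divide $2$. The elliptic analogue should run through the order of $P$ in $E(\mathbb{Z}/N\mathbb{Z})$: the sequence $D_{m^k}$ is a divisibility sequence, so $D_{m^k} \mid D_{m^\ell}$ for $k \le \ell$, and $F_k^{(m)} = D_{m^k}/D_{m^{k-1}}$. First I would fix a prime $p$ (and then handle prime powers) dividing both $F_k^{(m)}$ and $F_\ell^{(m)}$ with $k < \ell$, and set aside the finitely many bad primes dividing $6\Delta(E)$; for those one argues separately or absorbs them — but note the bound is $\gcd \mid m$, so I must be careful that the only primes allowed are those dividing $m$, which suggests the bad primes $p \mid \Delta(E)$ do \emph{not} actually occur as common divisors, or the setup forces $p \mid m$.

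The key mechanism is the order of $P$ modulo $p$. Using the order-universality circle of ideas (as in Corollary \ref{Cor1} for $m=2$, which I would first generalize to odd $m$): if $p \nmid 6\Delta(E)$ and $p \mid F_k^{(m)}$, then $P$ has order exactly $m^k \cdot r$ in $E(\mathbb{F}_p)$ for some $r \mid$ (something prime to $m$) — more precisely $p \mid D_{m^k}$ but $p \nmid D_{m^{k-1}}$ means the order of $P$ in $E(\mathbb{F}_p)$ divides $m^k$ but not $m^{k-1}$, hence the $m$-part of the order is exactly $m^k$ (writing $m = \prod q_i^{a_i}$, at least one $q_i$ appears to a higher power). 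Then if also $p \mid F_\ell^{(m)}$ with $\ell > k$, the $m$-part of the order would have to be $m^\ell \ne m^k$, a contradiction — \emph{unless} $p \mid m$, because the clean "the order divides $m^k$ iff $p \mid D_{m^k}$" statement can fail at primes dividing $m$ (the reduction map $E_1(\mathbb{Q}_p) \to$ formal group and the relation between $D_n$ and orders gets muddied when $p \mid n$). So the structure is: show that for $p \nmid m$ (and $p \nmid 6\Delta(E)$), $p$ cannot divide two distinct generalized Fermat numbers; handle $p \mid 6\Delta(E)$, $p \nmid m$ by a direct valuation estimate on the EDS showing such $p$ contributes equally or not at all; conclude the gcd is supported only on primes dividing $m$, and then pin down the exponent to get $\gcd \mid m$ rather than just $\gcd \mid m^\infty$.

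Concretely I would organize it as: (1) Establish the generalized order-universality lemma: for $p \nmid 6\Delta(E)$, $P$ has order $m^k$ in $E(\mathbb{F}_p)$ iff $p \mid D_{m^k}$ and $p \nmid D_{m^{k-1}}$, equivalently $p \mid F_k^{(m)}$ and $p \nmid F_j^{(m)}$ for $j<k$ (this is essentially Theorem \ref{GEFN OU theorem}'s proof specialized to prime $N$). (2) Deduce that a prime $p \nmid 6\Delta(E)$ divides at most one $F_k^{(m)}$. (3) For $p \mid \Delta(E)$ with $p \nmid 6m$, use the theory of EDS at bad primes — the valuation $v_p(D_n)$ is governed by the reduction type and grows like a quadratic in $n$ once $p \mid D_{n_0}$ for the minimal such $n_0$ (the "rank of apparition"), and since $m^k/m^{k-1}$ ratios of these valuations telescope, show $v_p(F_k^{(m)}) = 0$ for all but one $k$, or more simply that $p$ divides consecutive $F_k^{(m)}$ only in a controlled way that still can't produce a common factor. (4) For $p \mid m$: bound $v_p(\gcd(F_k^{(m)}, F_\ell^{(m)}))$ by the corresponding power of $p$ in $m$; this uses that in the telescoping product $D_{m^k} = F_0^{(m)} F_1^{(m)} \cdots F_k^{(m)}$, the $p$-adic valuations are constrained by how multiplication-by-$m$ acts on the formal group at $p$ (a factor of $v_p(m)$ per step, roughly). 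Assemble (2)–(4) into the bound $\gcd \mid m$.

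The main obstacle I anticipate is step (3)/(4): the clean order-counting argument only works at good primes $p \nmid 6\Delta(E)$, and the whole point of the statement (and the reason the conclusion is $\gcd \mid m$ rather than $\gcd = 1$) is the behavior at primes dividing $m$, where one must pass to the formal group / Néron models and carefully track $p$-adic valuations through the multiplication-by-$m$ map — controlling the \emph{exponent} of $p$ (not just that $p$ could divide) is the delicate part. I would also need to double-check that primes dividing $\Delta(E)$ but not $6m$ genuinely cannot be common divisors, which may require the minimality/reducedness hypotheses on the model and a short case analysis on Kodaira type; if this turns out to be false in general, the statement might implicitly rely on those primes being excluded, and I would revisit whether the hypotheses as stated suffice.
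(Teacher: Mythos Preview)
Your proposal has the right instincts but contains a genuine gap in step~(2) and is more complicated than needed. You assert that for a good prime $p \nmid 6\Delta(E)$, the condition $p \mid F_k^{(m)}$ amounts to ``$p \mid D_{m^k}$ but $p \nmid D_{m^{k-1}}$'', and from this you read off the order of $P$ in $E(\mathbb{F}_p)$. But $p \mid F_k^{(m)}$ only says $\ord_p(D_{m^k}) > \ord_p(D_{m^{k-1}})$; it does \emph{not} say $p \nmid D_{m^{k-1}}$. If $p$ already entered the sequence at some earlier index, then $p \mid D_{m^{k-1}}$ too, and the order of $P$ mod $p$ carries no information about whether the valuation jumps again at step $k$. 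So the order argument alone cannot rule out $p$ dividing two distinct $F_k^{(m)}$'s; what you actually need is a \emph{valuation} statement, not a divisibility/order statement.

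The paper supplies exactly that statement and nothing else: the Silverman--Stephens lemma (Lemma~\ref{SS lemma}) says that if $p \mid D_n$ and $m$ is odd, then $\ord_p(D_{mn}) = \ord_p(mD_n)$, with no hypothesis whatsoever on the reduction type of $E$ at $p$. Applying this with $n = m^{s-1}$ gives $\ord_p(F_s^{(m)}) = \ord_p(m)$ for every $s$ past the entry point of $p$ (Proposition~\ref{ord prop}), which immediately yields $\ord_p(\gcd(F_k^{(m)}, F_\ell^{(m)})) \le \ord_p(m)$ for all $k \neq \ell$. This single lemma handles good primes, bad primes, and primes dividing $m$ uniformly, so your case split in steps (2)--(4) is unnecessary and your concerns about Kodaira types dissolve entirely. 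Your step~(4), tracking $p$-adic valuations through the formal group, is essentially how one \emph{proves} the Silverman--Stephens lemma; citing it directly collapses your four-step program to one.
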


The heart of the proof relies on the following lemma from \cite{SS}:

\begin{Lemma}
\label{SS lemma}
Let $D = (D_n)_{n \geq 1}$ be a minimal EDS, let $n \geq 1,$ and let $p$ be a prime satisfying $p \mid D_n.$

\begin{enumerate}[(a)]
\item For all $m \geq 1$ we have
$$\ord_p(D_{mn}) \geq \ord_p(mD_n).$$
\item The inequality in (a) is strict,
$$\ord_p(D_{mn}) > \ord_p(mD_n),$$
if and only if\\
$p = 2$, $2 \mid m$, $\ord_2(D_n) = 1$ and ($E$ has ordinary or multiplicative reduction at 2).
\end{enumerate}
\end{Lemma}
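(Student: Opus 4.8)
The plan is to recast the entire statement as a valuation computation inside the formal group $\hat{E}$ of the minimal model at $p$, where $\ord_p(D_n)$ becomes the $p$-adic valuation of the formal parameter of the point $[n]P$, and passing from index $n$ to $mn$ corresponds to applying the formal multiplication-by-$m$ series. First I would base change the minimal Weierstrass equation to $\ZZ_p$ (minimality over $\ZZ$ is local minimality at each $p$, so the reduction type is well defined) and form the formal group $\hat{E}/\ZZ_p$ with parameter $z=-x/y$. The hypothesis $p\mid D_n$ says precisely that $Q:=[n]P$ lies in the kernel of reduction $E_1(\QQ_p)$; writing $Q=\left(\tfrac{A_n}{D_n^2},\tfrac{B_n}{D_n^3}\right)$ gives $z(Q)=-A_nD_n/B_n$, and since $\gcd(A_n,D_n)=\gcd(B_n,D_n)=1$ one gets $\ord_p(z(Q))=\ord_p(D_n)=:v\geq 1$. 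As $[mn]P=[m]Q$ and the formal group law yields $z([m]Q)=[m](z(Q))$ with $[m](T)\in\ZZ_p[[T]]$, the lemma becomes the assertion that $\ord_p\big([m](t)\big)\geq \ord_p(m)+\ord_p(t)$ whenever $\ord_p(t)=v\geq 1$, together with the stated characterization of strictness.

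Next I would reduce to prime powers. Recall $[m](T)=mT+(\deg\geq 2)$; factoring $m=p^s m'$ with $p\nmid m'$ and using that the endomorphisms commute gives $[m]=[m']\circ[p]^{\circ s}$. Since $[m'](T)=m'T+\cdots$ has unit leading coefficient, $[m']$ preserves valuation on $E_1(\QQ_p)$ (its linear term strictly dominates), so everything comes down to how a single $[p]$ moves the valuation, then iterating. The structural input I would invoke is the reduction-type dichotomy for $[p]\bmod p$: writing $[p](T)=pT+c_2T^2+c_3T^3+\cdots$ with $c_i\in\ZZ_p$, the reduction $\overline{[p]}\in\FF_p[[T]]$ equals $u\,T^{p^h}+(\deg>p^h)$ for a unit $u$, where $h$ is the height of the reduced one-dimensional formal group — $h=1$ for ordinary good or multiplicative reduction (reduced group $\hat{\mathbb{G}}_m$), $h=2$ for supersingular reduction, and $\overline{[p]}=0$ (every $c_i\equiv 0$, reduced group $\hat{\mathbb{G}}_a$) for additive reduction. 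Hence the smallest $i\geq 2$ at which $c_i$ is a unit is $i=p^h$, with none in the additive case.

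With $\ord_p(t)=w\geq 1$, the two lowest-valuation candidate terms of $[p](t)$ are the linear term $pt$, of valuation $1+w$, and $c_{p^h}t^{p^h}$, of valuation $p^hw$; all others are strictly larger. Comparing $1+w$ with $p^hw$, the linear term is the unique minimum — so $\ord_p([p](t))=1+w$ exactly — for $h\geq 2$, for $h=1$ with $p$ odd, and for $h=1,\ p=2,\ w\geq 2$; the only coincidence $1+w=p^hw$ is $p=2,\ h=1,\ w=1$. In that single exceptional case $[2](t)=2t+c_2t^2+O(t^3)$ with $c_2\in\ZZ_2^\times$, and writing $t=2\epsilon$ with $\ord_2(\epsilon)=0$ gives $2t+c_2t^2=4\epsilon(1+c_2\epsilon)$; since $c_2\epsilon$ is a $2$-adic unit, $1+c_2\epsilon\equiv 0\pmod 2$, forcing $\ord_2([2](t))\geq 3>2$. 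Thus a single $[p]$ adds exactly $1$ to the valuation except in this configuration, where it overshoots. Iterating across $[p]^{\circ s}$ from $w=v$ and then applying the valuation-preserving $[m']$: in every non-coincidental situation one obtains $\ord_p(D_{mn})=v+s=\ord_p(mD_n)$, which is part (a) with equality; in the coincidental configuration ($p=2$, ordinary or multiplicative reduction, $v=\ord_2(D_n)=1$, and $s=\ord_2(m)\geq 1$, i.e. $2\mid m$) the first step jumps from $1$ to some $w_1\geq 3$, and since $w_1\geq 2$ every later step again adds exactly $1$ with no new coincidence, giving $\ord_2(D_{mn})=w_1+(s-1)\geq s+2>s+1=\ord_2(mD_n)$. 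This proves (a) in all cases and reproduces the biconditional of (b) exactly.

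The main obstacle I anticipate is not the valuation bookkeeping but justifying the structural input cleanly: namely that for the minimal model the formal group is defined over $\ZZ_p$ with $[p]\in\ZZ_p[[T]]$ even at primes of bad reduction, and that $\overline{[p]}$ has the stated leading term $u\,T^{p^h}$ — equivalently, that $c_2$ is a $p$-adic unit precisely in the $h=1$, $p=2$ case. I would settle this by the standard classification of one-dimensional formal groups over $\FF_p$ by height, identifying the reduced formal group as $\hat{\mathbb{G}}_m$, $\hat{\mathbb{G}}_a$, or a height-$2$ group according to the reduction type at $p$; once that dichotomy is in hand, the remaining steps are the elementary $p$-adic estimates above, the only delicate point being the cancellation $1+c_2\epsilon\equiv 0\pmod 2$ that makes the $p=2$ exceptional case always strict.
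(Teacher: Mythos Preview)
The paper does not prove this lemma; it is quoted from \cite{SS} and used as a black box, so there is no in-paper argument to compare against. Your formal-group approach is essentially the standard proof (and is what appears in the cited reference): identifying $\ord_p(D_n)$ with the valuation of the formal parameter $z=-x/y$, reducing to a single application of $[p]$ since $[m']$ with $p\nmid m'$ preserves valuation, and then comparing the linear term $pt$ against the height term $c_{p^h}t^{p^h}$ is exactly right, as is the parity cancellation $1+c_2\epsilon\equiv 0\pmod 2$ that forces strict inequality in the lone coincidental case $(p,h,w)=(2,1,1)$. The structural input you flag --- that $c_2\in\ZZ_2^\times$ precisely when the reduction at $2$ is ordinary or multiplicative --- is the standard height classification of one-dimensional formal groups over $\FF_p$ and is the correct thing to invoke; once that is in hand the rest is the elementary bookkeeping you describe.
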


For our purposes, the conditions of (b) will never be met, since we are only working with odd $m$. Thus, we always have equality, i.e., if a prime $p$ satisfies $p \mid D_n$, then
$$\ord_p(D_{mn}) = \ord_p(mD_n).$$ 
We can apply Lemma \ref{SS lemma} to generalized elliptic Fermat numbers in the following way:

\begin{Proposition}
\label{ord prop}
Let $D = (D_n)_{n \geq 1}$ be a minimal EDS, and let $m \geq 1$ be an odd integer. If $p \mid D_{m^{s-1}}$ for some $s \geq 1$, then
$$\ord_p({F_s}^{(m)}) = \ord_p(m).$$
\end{Proposition}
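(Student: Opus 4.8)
The goal is to compute $\ord_p(F_s^{(m)}) = \ord_p(D_{m^s}/D_{m^{s-1}}) = \ord_p(D_{m^s}) - \ord_p(D_{m^{s-1}})$ under the hypothesis that $p \mid D_{m^{s-1}}$ and $m$ is odd. The natural approach is to apply Lemma \ref{SS lemma} with $n = m^{s-1}$, which is legitimate precisely because $p \mid D_n$, and with multiplier $m$, so that $mn = m^s$. Since $m$ is odd, condition (b) of Lemma \ref{SS lemma} can never hold (it requires $2 \mid m$), so we are always in the equality case: $\ord_p(D_{m^s}) = \ord_p(m D_{m^{s-1}}) = \ord_p(m) + \ord_p(D_{m^{s-1}})$. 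Subtracting $\ord_p(D_{m^{s-1}})$ from both sides gives $\ord_p(F_s^{(m)}) = \ord_p(m)$, which is exactly the claim.

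\medskip

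\noindent \textbf{Carrying it out.} First I would write $F_s^{(m)} = D_{m^s}/D_{m^{s-1}}$ from Definition \ref{GEFN def} (using $s \geq 1$), and recall that $D_{m^{s-1}} \mid D_{m^s}$ since $m^{s-1} \mid m^s$ and EDS terms form a divisibility sequence, so that $\ord_p(F_s^{(m)}) = \ord_p(D_{m^s}) - \ord_p(D_{m^{s-1}})$ is a nonnegative integer. Next I would invoke Lemma \ref{SS lemma} with $n := m^{s-1}$ and multiplier $m$; the hypothesis $p \mid D_{m^{s-1}}$ is exactly the hypothesis $p \mid D_n$ needed to apply the lemma. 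I would then observe that since $m$ is odd, the condition ``$2 \mid m$'' in part (b) fails, hence the strict inequality cannot occur, and we conclude $\ord_p(D_{m^s}) = \ord_p(m D_{m^{s-1}}) = \ord_p(m) + \ord_p(D_{m^{s-1}})$. Finally, rearranging yields $\ord_p(F_s^{(m)}) = \ord_p(m)$.

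\medskip

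\noindent \textbf{Main obstacle.} There is essentially no obstacle: the content is entirely front-loaded into Lemma \ref{SS lemma}, and the only thing to be careful about is bookkeeping with the valuations — specifically, making sure that $D_{m^{s-1}} \mid D_{m^s}$ is invoked so the subtraction of valuations is valid, and that the hypothesis of the lemma ($p \mid D_n$ with $n = m^{s-1}$) is recorded as genuinely satisfied rather than assumed vacuously. One minor subtlety worth a sentence: if $m = 1$ then $\ord_p(m) = 0$ and $F_s^{(1)} = D_1/D_1 = 1$, so the statement is trivially true and consistent; the interesting case is $m > 1$ odd. It is also worth noting that the conclusion is independent of $s$, which is precisely the feature exploited downstream in the proofs of Theorems \ref{GEFN coprimality theorem} and \ref{GEFN OU theorem}.
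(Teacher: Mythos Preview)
Your proposal is correct and follows essentially the same approach as the paper: apply Lemma~\ref{SS lemma} with $n = m^{s-1}$, use oddness of $m$ to rule out the strict-inequality case (b), and subtract valuations. Your write-up is in fact slightly more careful than the paper's (explicitly noting the divisibility $D_{m^{s-1}} \mid D_{m^s}$ and the trivial $m=1$ case), but the argument is the same.
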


\begin{proof}
Consider the case of Lemma \ref{SS lemma} where $n = m^{s-1}$. Then if $p \mid D_{m^{s-1}}$ for some $s \geq 1$,
$$\ord_p(D_{m^s}) = \ord_p(mD_{m^{s-1}}).$$
It immediately follows that
$$\ord_p({F_s}^{(m)})=\ord_p\left(\dfrac{D_{m^s}}{D_{m^{s-1}}}\right) = \ord_p(m).$$
\end{proof}

We now use Proposition \ref{ord prop} to prove Theorem \ref{GEFN coprimality theorem}. For simplicity, we will let ${F_k}^{(m)}$ denote ${F_k}^{(m)}(E,P)$ whenever it appears in the rest of the paper.

\begin{proof}
Let $p$ be prime, and suppose $p \mid D_{m^{s-1}}$ for some $s \geq 1$. Let $t$ be the smallest index for which $p \mid D_{m^{t-1}}$, i.e., let $t = \min\{s \geq 1 : p \mid D_{m^{s-1}}\}$. Since $\{D_n\}$ is a divisibility sequence, it is given that $p \mid D_{m^{t-1}}$ implies $p \mid D_{m^{k-1}}$ for all $k \geq t$. Without loss of generality, we assume $k<\ell$ throughout the proof. Thus, for all distinct $k,\ell$ with $\ell> k \geq t$, we can use Proposition \ref{ord prop} in order to conclude that
$$\ord_p({F_k}^{(m)}) = \ord_p({F_\ell}^{(m)}) = \ord_p(m).$$

Therefore, for each prime $p$ that divides a term in $\{D_n\}$, and for all distinct $k, \ell \geq t$, where $t$ is the entry point of $p$, we have shown that
\begin{equation}
\label{coprimality eq 1}
\ord_p(\gcd({F_k}^{(m)},{F_\ell}^{(m)})) = \ord_p(m).
\end{equation}

For all distinct $k,\ell$ with $k<t-1$, $p$ is not a factor of $\gcd({F_k}^{(m)},{F_\ell}^{(m)}))$. To see this, note that $t = \min\{s \geq 1 : p \mid D_{m^{s-1}}\}$ implies $p \nmid D_{m^{k}}.$ So $p \nmid {F_k}^{(m)},$ and for all distinct $k, \ell$ with $k < t-1$, we have the desired
\begin{equation}
\label{coprimality eq 2}
\ord_p(\gcd({F_k}^{(m)},{F_\ell}^{(m)})) = 0.
\end{equation}

When $k=t-1$ and $\ell>k$, we have
$$p\nmid D_{m^{k-1}} ~\text{and}~p\mid D_{m^{k}}$$
and
$$p\mid D_{m^{\ell-1}}~\text{and}~p\mid D_{m^{\ell}}.$$
Therefore, we have $\ord_{p}(F_{k}^{(m)})>0$ and $\ord_{p}(F_{\ell}^{(m)})=\ord_{p}(m)$ and we get
\begin{equation}
\label{coprimality eq 3}
\ord_{p}(\gcd({F_k}^{(m)},{F_\ell}^{(m)}))= \ord_{p}(m).
\end{equation}
It follows from Equations \eqref{coprimality eq 1}, \eqref{coprimality eq 2}, and \eqref{coprimality eq 3} that for any distinct $k, \ell$ and any prime $p$,

\begin{equation}
\label{gcd}
\ord_p(\gcd({F_k}^{(m)},{F_\ell}^{(m)})) = 
\begin{dcases}
\ord_p(m) & \text{if } p \mid D_{m^{t-1}} \text{ for some } t \leq k\\
0 & \text{ otherwise.}
\end{dcases}
\end{equation}
This implies
$$\gcd({F_k}^{(m)},{F_\ell}^{(m)}) \mid m.$$
\end{proof}

\begin{Remark}
\label{coprimality rmk}
The proof of Theorem \ref{GEFN coprimality theorem} actually implies a more specific result than the theorem states. For once a prime $p$ appears as a divisor of some ${F_t}^{(m)}$, then $p^{\ord_p(m)} \mid {F_k}^{(m)}$ for all $k \geq t$. Thus, if for example $3 \mid {F_t}^{(15)}(E,P)$, then Theorem \ref{GEFN coprimality theorem} only tells us that $\gcd({F_k}^{(15)},{F_\ell}^{(15)}) \in \{1, 3, 5, 15\}$ for all distinct $k, \ell \geq t$, but in actuality, we know that $\gcd({F_k}^{(15)},{F_\ell}^{(15)}) \in \{3, 15\}$ because $3 \nmid 1$ and $3 \nmid 5$. This is even stronger in the case where $m = p^a$ for some prime $p$, as stated in the proof of the corollary below.
\end{Remark}

\begin{Corollary}
\label{GEFN coprimality cor}
Let $F=(F_{k}^{(p^a)}(E,P))_{k\in\NN}$ be the sequence of generalized elliptic Fermat numbers for a fixed elliptic curve $E$, a rational point $P\in E(\QQ)$ and an odd prime power $p^a$. Then for all distinct $k, \ell \geq 0$, 
$$\gcd({F_k}^{(p^a)}(E,P),{F_\ell}^{(p^a)}(E,P)) \in \{1, p^a\}.$$


\end{Corollary}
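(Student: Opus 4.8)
The plan is to deduce Corollary \ref{GEFN coprimality cor} directly from Theorem \ref{GEFN coprimality theorem} together with the sharpening recorded in Remark \ref{coprimality rmk}. First I would invoke Theorem \ref{GEFN coprimality theorem} with $m = p^a$: for any distinct $k,\ell$ we already know that $\gcd({F_k}^{(p^a)},{F_\ell}^{(p^a)}) \mid p^a$, so the gcd is automatically of the form $p^b$ for some $0 \leq b \leq a$. The only thing left to prove is that the gcd cannot be $p^b$ with $1 \leq b < a$; that is, the gcd is either $1$ or exactly $p^a$.

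Next I would use the explicit order computation from the proof of Theorem \ref{GEFN coprimality theorem}, namely Equation \eqref{gcd}: for any prime $q$ and any distinct $k < \ell$,
$$\ord_q(\gcd({F_k}^{(p^a)},{F_\ell}^{(p^a)})) = \ord_q(p^a) \text{ if } q \mid D_{p^{a(t-1)}}\text{... }$$
Wait — I should phrase this carefully. Applying the equation with $m = p^a$, the only prime that can divide the gcd is $q = p$, and when it does divide the gcd we have $\ord_p(\gcd) = \ord_p(p^a) = a$. In other words, either $p \nmid \gcd({F_k}^{(p^a)},{F_\ell}^{(p^a)})$, in which case the gcd is $1$, or $p \mid \gcd({F_k}^{(p^a)},{F_\ell}^{(p^a)})$, in which case $\ord_p(\gcd) = a$ exactly and hence the gcd equals $p^a$. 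This is precisely the dichotomy $\gcd \in \{1, p^a\}$, because the prime-power case of Remark \ref{coprimality rmk} leaves no intermediate possibility.

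Concretely, I would organize the write-up as follows: (1) cite Theorem \ref{GEFN coprimality theorem} to reduce to showing $\ord_p(\gcd)$ is $0$ or $a$; (2) recall that for a prime $p$ and any $s \geq 1$ with $p \mid D_{p^{a(s-1)}}$, Proposition \ref{ord prop} gives $\ord_p({F_s}^{(p^a)}) = \ord_p(p^a) = a$; (3) let $t$ be the entry point $\min\{s \geq 1 : p \mid D_{p^{a(s-1)}}\}$ and note that for $k \geq t$ every term ${F_k}^{(p^a)}$ has $\ord_p$ equal to $a$, whereas for $k < t$ we have $p \nmid D_{p^{ak}}$...

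Actually, to keep things clean, I would simply appeal to Equation \eqref{gcd}. Here is the proof.

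\begin{proof}
By Theorem \ref{GEFN coprimality theorem}, we have $\gcd({F_k}^{(p^a)},{F_\ell}^{(p^a)}) \mid p^a$ for all distinct $k, \ell \geq 0$, so this gcd equals $p^b$ for some $0 \leq b \leq a$. It therefore suffices to show that $b$ cannot lie strictly between $0$ and $a$. The only prime that can divide $p^b$ is $p$ itself, so by Equation \eqref{gcd} (applied with $m = p^a$), for any distinct $k, \ell$ we have
$$\ord_p(\gcd({F_k}^{(p^a)},{F_\ell}^{(p^a)})) \in \{0, \ord_p(p^a)\} = \{0, a\}.$$
Hence $b \in \{0, a\}$, which means
$$\gcd({F_k}^{(p^a)}(E,P),{F_\ell}^{(p^a)}(E,P)) \in \{1, p^a\},$$
as claimed.
\end{proof}

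The main point to be careful about — rather than a genuine obstacle — is making sure the order computation in Equation \eqref{gcd} is being read correctly: that equation already records that whenever $p$ divides the gcd, its exact $p$-adic valuation in the gcd is $\ord_p(m) = a$ (not merely $\geq 1$), which is exactly what rules out intermediate powers. Everything else is a direct specialization of Theorem \ref{GEFN coprimality theorem}, and no new estimates or constructions are needed.
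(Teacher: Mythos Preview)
Your proof is correct and follows exactly the paper's approach: the paper's entire argument is the one-line ``The proof follows from \eqref{gcd},'' and you have simply spelled out that deduction in detail by specializing Equation \eqref{gcd} to $m = p^a$ and reading off $\ord_p(\gcd) \in \{0,a\}$.
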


\begin{proof}
The proof follows from \eqref{gcd}.






\end{proof}

\begin{Example}
\label{Coprimality ex}
We factor the generalized elliptic Fermat sequence from Example \ref{GEFN ex}:\\

\begin{tabular}{l}
{$F_0^{(3)}(E, P) = 1$}\\
\\
{$F_{1}^{(3)}(E,P) = \frac{3}{1} = 3$}\\
\\
{$F_2^{(3)}(E, P) = \frac{10593}{3} = 3531 = 3 * 11 * 107$}\\
\\
{$F_3^{(3)}(E, P) = \frac{4777\ldots2659}{10593} = 4509\ldots2163 = 3 * 3240769000879427 * 46385324158085723$}\\
\\
\end{tabular}\\

\end{Example}

We see that $\gcd({F_0}^{(3)},{F_1}^{(3)}) = 1$, while $\gcd({F_k}^{(3)},{F_\ell}^{(3)}) = 3$ for distinct $1 \leq k, \ell \leq 3$.

In addition to proving the quasi-coprimality of elliptic Fermat numbers, the authors of \cite{BDKRW} include a result connecting divisibility with order, which they call \emph{order universality}. This property holds in full force for generalized elliptic Fermat numbers. In fact, the proofs are nearly the same as the proofs for the case where $m = 2$.

\begin{Theorem}
\label{GEFN OU}
Let $F=(F_{k}^{(m)})_{k\in\NN}$ be the sequence of generalized elliptic Fermat numbers for a fixed elliptic curve $E$, a rational point $P\in E(\QQ)$ and an odd integer $m \geq 1$. Then for all $N \in \NN$ satisfying $\gcd(N, 6\Delta(E)) = 1$,
$$P \textrm{ has order } m^k \textrm{ in } E(\ZZ/N\ZZ) \Longleftrightarrow N \mid {F_0}^{(m)} \cdots {F_k}^{(m)} \textrm{ and } N \nmid {F_0}^{(m)}\cdots {F_{k-1}}^{(m)}.$$ 
\end{Theorem}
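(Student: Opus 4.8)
The plan is to mirror the $m=2$ argument of \cite{BDKRW} (their Theorem 4), using the reduction-mod-$N$ theory of elliptic curves together with the order-universality behaviour of the terms $D_{n}$ in an EDS. The key structural fact is that, since $\gcd(N,6\Delta(E))=1$, the point $P$ reduces to a well-defined point in $E(\ZZ/N\ZZ)$ lying in the (good-reduction) group, and for a prime $p \mid N$ the statement ``$p^{e}\mid D_{n}$'' is exactly the statement ``$[n]P \equiv O$ in $E(\ZZ/p^{e}\ZZ)$'' — i.e. the order of $P$ modulo $p^{e}$ divides $n$. I would first record this dictionary as a lemma (it is essentially standard reduction theory plus the definition of the EDS via denominators of $x$-coordinates), valid for each prime power $p^{e}\parallel N$, and then assemble the global statement by the Chinese Remainder Theorem.

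The forward direction: suppose $P$ has order exactly $m^{k}$ in $E(\ZZ/N\ZZ)$. For each prime power $p^{e}\parallel N$, the order of $P$ in $E(\ZZ/p^{e}\ZZ)$ divides $m^{k}$, hence is $m^{j}$ for some $j=j(p^{e})\le k$, and by CRT $\max_{p^{e}\parallel N} j(p^{e}) = k$. For such $p^{e}$, the order $m^{j}$ dividing $m^{k}$ tells us $p^{e}\mid D_{m^{k}}$ but the minimality of the order (order does not divide $m^{j-1}$ for the $p^{e}$ achieving the maximum) gives $p^{e}\nmid D_{m^{k-1}}$. Now I invoke the telescoping identity $D_{m^{k}} = D_{1}\cdot \prod_{s=1}^{k} F_{s}^{(m)} = \prod_{s=0}^{k} F_{s}^{(m)}$, which is immediate from Definition \ref{GEFN def}. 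Combining: $p^{e}\mid D_{m^{k}} = \prod_{s=0}^{k}F_{s}^{(m)}$ for every $p^{e}\parallel N$, so $N\mid \prod_{s=0}^{k}F_{s}^{(m)}$; and since $p^{e}\nmid D_{m^{k-1}}=\prod_{s=0}^{k-1}F_{s}^{(m)}$ for at least one $p^{e}\parallel N$, we get $N\nmid \prod_{s=0}^{k-1}F_{s}^{(m)}$.

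The reverse direction: suppose $N\mid \prod_{s=0}^{k}F_{s}^{(m)} = D_{m^{k}}$ and $N\nmid \prod_{s=0}^{k-1}F_{s}^{(m)} = D_{m^{k-1}}$. From $N\mid D_{m^{k}}$ and the dictionary, for every prime power $p^{e}\parallel N$ the order of $P$ in $E(\ZZ/p^{e}\ZZ)$ divides $m^{k}$; since $D$ is a divisibility sequence, a reduction-theoretic fact (or directly: the order modulo $p^{e}$ is a power of $p$ times a divisor of $m^{k}$, but comparing with odd $m$ and $p\nmid 6\Delta$ forces it to be a divisor of $m^{k}$ — here is where oddness of $m$, and hence coprimality-type control, is used exactly as in Proposition \ref{ord prop}) the order is of the form $m^{j}$, $j\le k$. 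Hence the order of $P$ in $E(\ZZ/N\ZZ)$ divides $m^{k}$. If it divided $m^{k-1}$, then every $p^{e}\parallel N$ would have order dividing $m^{k-1}$, giving $p^{e}\mid D_{m^{k-1}}$ for all of them and thus $N\mid D_{m^{k-1}}$, a contradiction. Therefore the order is exactly $m^{k}$.

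The step I expect to be the main obstacle is the precise control of the $p$-part of the order: a priori the order of $P$ in $E(\ZZ/p^{e}\ZZ)$ could be $m^{j}$ times a power of $p$, and one must rule out the extra $p$-power factor (or show it is harmless) to conclude the order is genuinely a power of $m$. This is where the hypothesis $\gcd(N,6\Delta(E))=1$ and the oddness of $m$ do the work — essentially the same argument underlying Lemma \ref{SS lemma}(b) not applying and Proposition \ref{ord prop} holding — and I would state it carefully as a sub-lemma rather than wave at it. The rest is bookkeeping with CRT and the telescoping product $\prod_{s=0}^{k} F_{s}^{(m)} = D_{m^{k}}$.
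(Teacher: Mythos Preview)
Your approach is the paper's approach: the paper's proof is a one-sentence reference to \cite[Theorem~4]{BDKRW}, sketching only the reduction homomorphism $E(\QQ)\to E(\ZZ/N\ZZ)$, and you have simply written out what that argument says --- the dictionary $p^{e}\mid D_{n}\Longleftrightarrow[n]P=O$ in $E(\ZZ/p^{e}\ZZ)$ at each prime power, the telescoping identity $\prod_{s=0}^{k}F_{s}^{(m)}=D_{m^{k}}$, and CRT to assemble the global statement.

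Two comments. First, the step you flag as the ``main obstacle'' is not one. Under $\gcd(N,6\Delta(E))=1$ the dictionary above is an exact equivalence (this is standard good-reduction theory), so $p^{e}\mid D_{m^{k}}$ already forces the order of $P$ in $E(\ZZ/p^{e}\ZZ)$ to divide $m^{k}$; there is no stray $p$-power factor to rule out, and neither Lemma~\ref{SS lemma} nor Proposition~\ref{ord prop} plays any role in this theorem. Second, the genuine gap is one you skate past: you assert that the order ``is of the form $m^{j}$, $j\le k$'', but a divisor of $m^{k}$ has that shape only when $m$ is prime. For composite $m$ (even $m=p^{a}$ with $a>1$) the implication ``order $\mid m^{k}$ and order $\nmid m^{k-1}\Rightarrow$ order $=m^{k}$'' can fail --- e.g.\ if $m=9$ and $P$ has order~$3$ modulo $N$, then $N\mid D_{9}=F_{0}^{(9)}F_{1}^{(9)}$ and $N\nmid D_{1}=F_{0}^{(9)}$, yet the order is not $9$. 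The paper's one-line proof does not address this either, since the \cite{BDKRW} argument it invokes is for the prime $m=2$; for prime $m$ your argument (and theirs) goes through without change.
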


\begin{proof}
The proof is identical to that of Theorem 4 in \cite{BDKRW}. Namely, we define a homomorphism $\phi: E(\QQ) \rightarrow E(\ZZ/N\ZZ)$ that maps $P \mapsto P \bmod(n)$, then use the fact that $\phi(p^kP) = p^k\phi(P)$ to demonstrate that $P$ has order $m^k$ in $E(\ZZ/N\ZZ)$ exactly when $N \mid {F_0}^{(m)} \cdots {F_k}^{(m)} $ and $ N \nmid {F_0}^{(m)} \cdots {F_{k-1}}^{(m)}.$ 
\end{proof}

\begin{Corollary}
\label{GEFN OU cor}
Let $F=(F_{k}^{(m)})_{k\in\NN}$ be the sequence of generalized elliptic Fermat numbers for a fixed elliptic curve $E$, a rational point $P\in E(\QQ)$ and an odd integer $m \geq 1$. Let $p$ be an odd prime. Then 
$$P \textrm{ has order } m^k \textrm{ in } E(\FF_p) \Longleftrightarrow p \mid {F_k}^{(m)}.$$
\end{Corollary}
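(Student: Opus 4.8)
The plan is to obtain this as a direct specialization of the Order Universality theorem (Theorem~\ref{GEFN OU}) to the modulus $N = p$, combined with the ``at most one term is divisible by $p$'' phenomenon that is already extracted in the proof of Theorem~\ref{GEFN coprimality theorem}. Note that for Theorem~\ref{GEFN OU} to apply with $N=p$ one needs $\gcd(p,6\Delta(E))=1$, so I will read the hypothesis on $p$ as $p\nmid 6\Delta(E)$ (in particular $p\ge 5$); I will also assume $p\nmid m$, which is automatic in the classical case $m=2$ and is genuinely needed — see the final paragraph.

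First I would record the consequence of Equation~\eqref{gcd} that does all the work. Set $t = \min\{s\ge 1 : p\mid D_{m^{s-1}}\}$ (finite precisely when $p$ divides some generalized elliptic Fermat number); since $D_1=1$ we have $t\ge 2$. The $\ord_p$ computation in the proof of Theorem~\ref{GEFN coprimality theorem} gives $\ord_p(F_j^{(m)})=0$ for $j\le t-2$, $\ord_p(F_{t-1}^{(m)})>0$, and, using Proposition~\ref{ord prop} together with $p\nmid m$, $\ord_p(F_j^{(m)})=\ord_p(m)=0$ for $j\ge t$. Hence $p\mid F_k^{(m)}$ holds if and only if $k=t-1$, and equivalently $p\mid F_k^{(m)}$ if and only if $p\mid F_0^{(m)}\cdots F_k^{(m)}$ while $p\nmid F_0^{(m)}\cdots F_{k-1}^{(m)}$.

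Then I would finish by invoking Theorem~\ref{GEFN OU} with $N=p$ (so $E(\ZZ/p\ZZ)=E(\FF_p)$): $P$ has order $m^k$ in $E(\FF_p)$ if and only if $p\mid F_0^{(m)}\cdots F_k^{(m)}$ and $p\nmid F_0^{(m)}\cdots F_{k-1}^{(m)}$, and by the previous paragraph this last condition is equivalent to $p\mid F_k^{(m)}$, which is the claim. There is essentially no new computation here; the main point to get right is the $\ord_p$ bookkeeping, and that is contained in Equation~\eqref{gcd}. The one real subtlety is the hypothesis on $p$: if $p\mid m$, then as soon as $p$ divides some $D_{m^{t-1}}$ one gets $\ord_p(F_k^{(m)})=\ord_p(m)>0$ for every $k\ge t$, so $p$ divides infinitely many terms of the sequence while $P$ can have order $m^k$ in $E(\FF_p)$ for at most one value of $k$; thus the equivalence forces the extra assumption $p\nmid m$ (equivalently $p\nmid 6m\Delta(E)$), which in the $m=2$ case is invisible because $p$ is odd.
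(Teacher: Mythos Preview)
Your proposal is correct and follows essentially the same route as the paper: specialize Theorem~\ref{GEFN OU} to $N=p$ and use the coprimality result to show that (under $p\nmid m$) the prime $p$ divides at most one $F_j^{(m)}$, which converts the product condition into $p\mid F_k^{(m)}$. The only cosmetic difference is that the paper quotes Theorem~\ref{GEFN coprimality theorem} directly (if $p\mid F_k^{(m)}$ and $p\mid F_i^{(m)}$ with $i\ne k$ then $p\mid m$), whereas you unpack the finer $\ord_p$ information from Equation~\eqref{gcd}; your observation that the hypotheses $p\nmid 6\Delta(E)$ and $p\nmid m$ are genuinely needed matches exactly what the paper itself flags.
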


\begin{proof}
The proof is similar to that of Corollary 5 in \cite{BDKRW}. However, whereas the proof in \cite{BDKRW} relies on the fact that $\gcd({F_k}^{(2)},{F_\ell}^{(2)}) \in \{1,2\}$, here we require that $p \nmid m$ in order to make use of Theorem \ref{GEFN coprimality theorem}, which tells us that $\gcd({F_k}^{(m)},{F_\ell}^{(m)}) \mid m.$

The adapted proof proceeds as follows:

If $p \mid {F_0}^{(m)}(E, P) \cdots {F_k}^{(m)}(E, P)$ and $p \nmid {F_0}^{(m)}(E, P) \cdots {F_{k-1}}^{(m)}(E, P)$, then $p \mid {F_k}^{(m)}(E, P)$. Conversely, if $p \mid {F_k}^{(m)}(E, P)$, then $p \mid {F_0}^{(m)}(E, P) \cdots {F_k}^{(m)}(E, P)$. Theorem \ref{GEFN coprimality theorem} gives us $p \nmid {F_i}^{(m)}(E, P)$ for all $i \neq k$, implying $p \nmid {F_0}^{(m)}(E, P) \cdots {F_{k-1}}^{(m)}(E, P)$. Thus we have shown that $p \mid {F_k}^{(m)}(E, P)$ if and only if $p \mid {F_0}^{(m)}(E, P) \cdots {F_k}^n(E, P)$ and $p \nmid {F_0}^{(m)}(E, P) \cdots {F_{k-1}}^{(m)}(E, P)$, and the desired result follows from Theorem \ref{GEFN OU}.

\end{proof}

\begin{Example}
Using the curve $E$, point $P$ and integer $m$ from Example \ref{GEFN ex}, observe that the order of $P \in E(\FF_{593}) = 3^2$, and indeed, $593 \mid {F_2}^{(3)} = 3 * 593 = 1779.$

\end{Example}

\subsection{Primality conjecture for generalized elliptic Fermat numbers}
We can also extend previous discussions about the Primality of elliptic Fermat numbers to generalized elliptic Fermat numbers. First, we state an analogous conjecture for generalized elliptic Fermat numbers.

\begin{Conjecture}[Primality Conjecture for generalized elliptic Fermat numbers] 
Let $F=(F_{k}^{(m)}(E,P))_{k\in\NN}$ be the sequence of generalized elliptic Fermat numbers for a fixed elliptic curve $E$ and a rational point $P\in E(\QQ)$. Then $F$ contains only finitely many prime terms.
\end{Conjecture}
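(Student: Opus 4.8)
The plan is to mirror, essentially verbatim, the argument already given in Section 2 for the $m=2$ case, keeping track of where the odd-degree hypothesis was used and replacing it with the coprimality hypothesis $\gcd(m,d)=1$. The key observations needed are already in place: Corollary \ref{GEFN OU cor} plays the role of Corollary \ref{Cor1}, and the magnification relation $\hat{h}(P) = d\,\hat{h}(P')$ (from Siegel's theorem, via the isogeny $\phi:E'\to E$ of degree $d$) is independent of $m$.

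First I would prove the analogue of Lemma \ref{Lemma2}: for sufficiently large $k$, $\gcd(F_k^{(m)}(E',P'),F_k^{(m)}(E,P)) \neq 1$. Fix a prime $p \mid F_k^{(m)}(E',P')$ avoiding the finite bad set $S$ of primes where the reduction of $\phi$ fails to be an isogeny (and also avoiding the finite set of primes dividing $m$ or $6\Delta(E)6\Delta(E')$). By Corollary \ref{GEFN OU cor}, $P'$ has order $m^k$ in $E'(\FF_p)$. The reduced isogeny $\bar\phi: E'(\FF_p) \to E(\FF_p)$ and its dual satisfy $\hat{\phi}\circ\phi = [d]$; since $\gcd(m,d)=1$, multiplication-by-$d$ is an automorphism of the $m$-primary part of $E'(\FF_p)$, so $[d]$ preserves the order $m^k$ of $P'$, hence $\phi$ preserves it, hence $P = \phi(P')$ has order $m^k$ in $E(\FF_p)$, so $p \mid F_k^{(m)}(E,P)$ again by Corollary \ref{GEFN OU cor}. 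As in Proposition \ref{Corollary2}, this argument run over every such prime $p$ actually yields the divisibility $F_k^{(m)}(E',P') \mid F_k^{(m)}(E,P)$ for sufficiently large $k$ (the finitely many excluded primes dividing $6m\Delta(E)\Delta(E')$ can be absorbed into the "sufficiently large $k$", since by the coprimality theorem each such prime divides $F_k^{(m)}(E',P')$ for at most a bounded range of $k$, or divides $m$ and contributes a bounded power).

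Next I would invoke the growth rate. The $m=2$ case used Theorem \ref{Theorem1}, namely $\log F_k(E,P)/4^k \to \tfrac38 \hat h(P)$; the same computation from Definition \ref{GEFN def} gives $\log F_k^{(m)}(E,P)/m^{2k} \to c_m\,\hat h(P)$ for a positive constant $c_m$ (concretely $c_m = \tfrac12(1 - m^{-2})$, since $\log D_{m^k}^2 \sim \hat h(P) m^{2k}$ by Remark \ref{remark1} and $F_k^{(m)} = D_{m^k}/D_{m^{k-1}}$). Combining this with $\hat h(P) = d\,\hat h(P')$, we get
$$\frac{\log F_k^{(m)}(E,P)}{\log F_k^{(m)}(E',P')} \longrightarrow \frac{\hat h(P)}{\hat h(P')} = d > 1,$$
(using that $\hat h(P') \neq 0$ since $P$, hence $P'$, is nontorsion). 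So for $k$ large, $F_k^{(m)}(E',P')$ is a proper divisor of $F_k^{(m)}(E,P)$ with quotient exceeding $1$, forcing $F_k^{(m)}(E,P)$ to be composite. One should also note $F_k^{(m)}(E',P') > 1$ for large $k$ (again from the growth estimate and $\hat h(P')>0$), so the divisor is nontrivial on both ends.

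The main obstacle is the same subtlety that appears — somewhat glossed — in the $m=2$ proof: ensuring that the finitely many "bad" primes (those in $S$, those dividing $m$, and those dividing $6\Delta(E)\Delta(E')$) do not spoil the clean divisibility $F_k^{(m)}(E',P') \mid F_k^{(m)}(E,P)$. The honest fix is that one does not actually need full divisibility: it suffices that for large $k$ there exists a prime $p$ with $p \mid \gcd(F_k^{(m)}(E',P'), F_k^{(m)}(E,P))$ together with the strict inequality $\log F_k^{(m)}(E,P) > \log F_k^{(m)}(E',P') + \log p$, which follows from the growth asymptotics since the left side grows like $d\cdot m^{2k}\hat h(P')$ and the right like $m^{2k}\hat h(P') + o(m^{2k})$. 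I would therefore phrase the final step in terms of this weaker comparison rather than strict divisibility, exactly paralleling the proof of the $m=2$ theorem, and cite Lemma \ref{Lemma2}/Proposition \ref{Corollary2} together with the growth computation above.
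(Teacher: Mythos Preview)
The statement you were asked to prove is a \emph{Conjecture} in the paper, and the paper does not prove it: it is stated for an arbitrary pair $(E,P)$ with no magnification hypothesis, and remains open in that generality. Your proposal does not prove it either. From the first sentence on you assume that $P$ is magnified via an isogeny $\phi:E'\to E$ of degree $d$ with $\gcd(m,d)=1$; nothing in your argument touches the non-magnified case. So as a proof of the Conjecture there is a genuine gap --- the central hypothesis you rely on (existence of $\phi$, $P'$, and $d$ coprime to $m$) is simply not available.

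What you have written is, however, an accurate sketch of the paper's proof of the \emph{separate} Theorem at the end of Section~3.2 (the magnified case). For that restricted statement your approach is essentially the paper's own: the same dual-isogeny argument through Corollary~\ref{GEFN OU cor} to get $F_k^{(m)}(E',P')\mid F_k^{(m)}(E,P)$ for large $k$, the same growth computation yielding $\log F_k^{(m)}(E,P)/m^{2k}\to \tfrac12(1-m^{-2})\hat h(P)$ (this is exactly Theorem~\ref{Theorem2}), and the same comparison $\hat h(P)=d\,\hat h(P')$ to force a proper divisor. Your treatment of the finitely many bad primes is in fact more careful than the paper's, which glosses over this point just as it did in the $m=2$ case.
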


Using Corollary \ref{GEFN OU cor}, we can prove the following result.

\begin{Lemma}
Let $E/\QQ$ be a minimal magnified elliptic curve with a fixed point $P\in E(\QQ)$ having a (nonzero) degree $d$ isogeny $\phi: E'\rightarrow E$ from a minimal elliptic curve $E'/\QQ$ satisfying $\phi(P')=P$. For sufficiently large $k$ and $m$ with $\gcd(m,d)=1$, we have
$$\gcd(F_{k}^{(m)}(E',P'),F_{k}^{(m)}(E,P))\neq 1.$$
\end{Lemma}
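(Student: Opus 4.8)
The plan is to mirror the argument already used for the $m=2$ case in Lemma \ref{Lemma2}, substituting Corollary \ref{GEFN OU cor} for Corollary \ref{Cor1}. The point is that the obstruction to reducing the isogeny modulo $p$ is only a finite set of primes, and once we are away from that set, the isogeny and its dual sandwich multiplication-by-$d$, so the order of the point is preserved exactly when $\gcd(m,d)=1$.

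Here is the sequence of steps I would carry out. First, fix a prime $p$ dividing $F_{k}^{(m)}(E',P')$; by Corollary \ref{GEFN OU cor} (applied to $E'$, $P'$, $m$), this says precisely that $P'$ has order $m^{k}$ in $E'(\FF_{p})$, provided $p$ is odd and $p\nmid m$. Second, let $S$ be the finite set of primes at which $E$, $E'$, or $\phi$ fails to reduce well (bad reduction of either curve, or primes dividing the degree or the conductor of the isogeny), together with $2$ and the primes dividing $m$; discarding the finitely many $p\in S$ costs nothing since we only need one surviving prime for the conclusion $\gcd\neq 1$, and for large $k$ the term $F_{k}^{(m)}(E',P')$ has arbitrarily many prime factors (by the height growth in Remark \ref{remark1}, or simply because it tends to infinity and the $p\in S$ contribute bounded $p$-adic valuation). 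Third, reduce modulo such a $p$: the isogeny $\phi$ reduces to an isogeny $\bar\phi:E'(\FF_{p})\to E(\FF_{p})$ with dual $\hat{\bar\phi}$ satisfying $\hat{\bar\phi}\circ\bar\phi=[d]$ on $E'(\FF_{p})$. Since $P'$ has order $m^{k}$ and $\gcd(d,m^{k})=1$, the map $[d]$ is an automorphism of the cyclic group $\langle P'\rangle$, hence $[d]P'$ still has order $m^{k}$; therefore $\bar\phi(P')=P\bmod p$ has order divisible by $m^{k}$, and since $\bar\phi$ is a group homomorphism its image of an order-$m^{k}$ element has order dividing $m^{k}$, so $P$ has order exactly $m^{k}$ in $E(\FF_{p})$. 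Fourth, apply Corollary \ref{GEFN OU cor} in the other direction (to $E$, $P$, $m$, same odd $p\nmid m$) to conclude $p\mid F_{k}^{(m)}(E,P)$. Hence this same $p$ divides both $F_{k}^{(m)}(E',P')$ and $F_{k}^{(m)}(E,P)$, giving $\gcd(F_{k}^{(m)}(E',P'),F_{k}^{(m)}(E,P))\neq 1$.

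The main obstacle — really the only nontrivial point — is justifying that for sufficiently large $k$ there exists a prime $p\mid F_{k}^{(m)}(E',P')$ with $p\notin S$ and $p$ odd and $p\nmid m$. This is handled by the growth estimate: the analogue of Theorem \ref{Theorem1} for the $m$-power sequence gives $\log F_{k}^{(m)}(E',P')$ growing like a positive constant times $m^{2k}$ (cf. Remark \ref{remark1}), so $F_{k}^{(m)}(E',P')\to\infty$; since the combined $S$-part and the $2$-part and $m$-part of $F_{k}^{(m)}(E',P')$ have uniformly bounded size (each prime in this finite set contributes bounded valuation, by Lemma \ref{SS lemma} applied with $n$ the entry point, exactly as in the coprimality argument), for $k$ large $F_{k}^{(m)}(E',P')$ must have a prime factor outside this finite exceptional set. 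Everything else is the formal isogeny/dual-isogeny bookkeeping already present in the proof of Lemma \ref{Lemma2}, with $2^{k}$ replaced by $m^{k}$ and the key input $\gcd(m,d)=1$ ensuring $[d]$ acts invertibly on the relevant cyclic subgroup.
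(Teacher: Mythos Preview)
Your proposal is correct and follows essentially the same approach as the paper: fix a prime dividing $F_{k}^{(m)}(E',P')$, avoid the finite bad set $S$, and use the dual-isogeny identity $\hat{\phi}\circ\phi=[d]$ together with $\gcd(m,d)=1$ to transport the order $m^{k}$ across. If anything, you are more careful than the paper---you correctly invoke Corollary \ref{GEFN OU cor} (the paper's proof cites Corollary \ref{Cor1}, the $m=2$ version, by oversight), and you spell out why a prime outside $S$ actually exists for large $k$, which the paper leaves implicit in the phrase ``without loss of generality.''
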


\begin{proof}
Let $p$ be a fixed prime which divides $F_{k}^{(m)}(E',P')$. Let $S$ be the set of primes for which $E$, $E'$, and $\phi$ cannot give an isogeny modulo $p$. Note that $S$ is a finite set. Without loss of generality, we can assume $p\notin S$. From Corollary \ref{Cor1}, it is sufficient to prove that the isogeny $\phi$ reduction modulo $p$ (which is again an isogeny) preserves the order $m^{k}$ of $P'$
$$\phi:E'(\FF_{p})\rightarrow E(\FF_{p}).$$
We consider the dual isogeny $\hat{\phi}$ of $\phi$. We know
$$\hat{\phi}\circ \phi=[d]$$
where $[d]$ is the multiplication-by-$d$ map on $E'$. Then the map
$$[d]: E'(\FF_{p})\xrightarrow{\phi} E(\FF_{p})\xrightarrow{\hat{\phi}} E'(\FF_{p})$$
preserves the order of $m^{k}$ of $P'$ and so does $\phi$.    
\end{proof}  

\begin{Proposition}
Let $E/\QQ$ be a minimal magnified elliptic curve with a fixed point $P\in E(\QQ)$ having a (nonzero) degree $d$ isogeny $\phi: E'\rightarrow E$ from a minimal elliptic curve $E'/\QQ$ satisfying $\phi(P')=P$. For sufficiently large $k$, we actually have the following divisibility: For $m$ relatively prime to $d$,
$$F_{k}^{(m)}(E',P')\mid F_{k}^{(m)}(E,P).$$
\end{Proposition}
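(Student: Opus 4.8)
The plan is to mirror exactly the argument used for the $m=2$ case in Proposition \ref{Corollary2}, now that all the ingredients have been re-established in the generalized setting. Recall that Proposition \ref{Corollary2} was deduced by observing that the proof of Lemma \ref{Lemma2} gives more than a nontrivial gcd: it shows that \emph{every} prime $p$ dividing $F_k(E',P')$ (outside the finite bad set $S$) also divides $F_k(E,P)$. So first I would make that implication explicit in the $m$-adic setting: fix $k$ large and $m$ coprime to $d$, let $p$ be any prime dividing $F_k^{(m)}(E',P')$, and discard the finitely many $p\in S$. By Corollary \ref{GEFN OU cor} (the generalized order-universality corollary), $p\mid F_k^{(m)}(E',P')$ is equivalent to $P'$ having order exactly $m^k$ in $E'(\FF_p)$. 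The preceding lemma shows the reduced isogeny $\phi\colon E'(\FF_p)\to E(\FF_p)$ preserves the order $m^k$ of $P'$, so $P=\phi(P')$ has order exactly $m^k$ in $E(\FF_p)$, and then Corollary \ref{GEFN OU cor} applied to $(E,P)$ gives $p\mid F_k^{(m)}(E,P)$.

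Next I would upgrade this containment of prime divisors to an actual divisibility of integers, which is where one has to be slightly careful about multiplicities. The cleanest route is to invoke the generalized coprimality/order statement from Remark \ref{coprimality rmk} and Proposition \ref{ord prop}: for each prime $p$ dividing $D_n$ for some $n$, once $p$ appears in $F_t^{(m)}$ for the minimal such $t$, one has $\ord_p(F_k^{(m)}) = \ord_p(m)$ for all $k > t$ (and $\ord_p(F_t^{(m)}) \ge \ord_p(m)$ trivially, with equality controlled by $\ord_p$ of $D$ at the entry point). Since $\gcd(m,d)=1$ and, as in the proof of the $m=2$ theorem, $\hat h(P) = d\,\hat h(P')$ forces the entry points to eventually stabilize, for $k$ large both $F_k^{(m)}(E',P')$ and $F_k^{(m)}(E,P)$ are, at every prime $p$ dividing them, exactly $p^{\ord_p(m)}$ times a unit — \emph{except} possibly at the finitely many primes in $S$ or at primes dividing $m$ or $6\Delta(E)$. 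For $k$ beyond the entry point of all such exceptional primes, the local comparison $\ord_p(F_k^{(m)}(E',P')) = \ord_p(m) = \ord_p(F_k^{(m)}(E,P))$ holds prime-by-prime for every $p\mid F_k^{(m)}(E',P')$, and summing over all primes yields $F_k^{(m)}(E',P') \mid F_k^{(m)}(E,P)$.

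I expect the main obstacle to be the bookkeeping around the finite exceptional set and the requirement ``for sufficiently large $k$''. One must verify that: (i) the bad primes $S$ where $\phi$ fails to reduce to an isogeny, (ii) primes dividing $6\Delta(E)\Delta(E')$, and (iii) primes dividing $m$ each have a well-defined entry point into the $D$-sequence (or never appear), so that past the maximum of these finitely many entry points the multiplicity comparison is exact. The place where this could genuinely go wrong is a prime $p\mid m$ that also divides $F_k^{(m)}(E',P')$ with multiplicity exceeding $\ord_p(m)$ at the entry point $k=t$; but for $k$ strictly larger than that entry point Proposition \ref{ord prop} pins the multiplicity down to $\ord_p(m)$ on both sides, so choosing $k$ large enough resolves it. Modulo this routine uniformity argument, the statement follows, and I would simply write ``the proof is identical to that of Proposition \ref{Corollary2}, using Corollary \ref{GEFN OU cor} and Proposition \ref{ord prop} in place of Corollary \ref{Cor1}'' if a short proof is desired.
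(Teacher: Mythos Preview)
Your short version at the end --- ``the proof is identical to that of Proposition~\ref{Corollary2}, using Corollary~\ref{GEFN OU cor} in place of Corollary~\ref{Cor1}'' --- is exactly what the paper does: the proposition is stated with no proof at all, relying implicitly on the analogy with Proposition~\ref{Corollary2} (whose own proof is the single sentence ``The proof of Lemma~\ref{Lemma2} essentially implies the result''). So at the level of what the paper actually writes, your proposal matches.

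That said, your attempt to flesh out the multiplicity comparison contains a real error, and it is worth flagging since the paper never addresses this point either. You claim that for $k$ large enough, $\ord_p\bigl(F_k^{(m)}(E',P')\bigr)=\ord_p(m)$ for every prime $p$ dividing $F_k^{(m)}(E',P')$. This is false for primes $p\nmid m$: Proposition~\ref{ord prop} only applies when $p\mid D'_{m^{k-1}}$, but if $p\nmid m$ divides $F_k^{(m)}(E',P')$ then (by the same proposition) necessarily $p\nmid D'_{m^{k-1}}$ and $p\mid D'_{m^k}$, so $k$ is precisely the entry index for $p$ and $\ord_p\bigl(F_k^{(m)}(E',P')\bigr)=\ord_p(D'_{m^k})>0=\ord_p(m)$. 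There is no uniform ``$k$ large enough'' that avoids this, since new primes enter at every stage. To make the divisibility honest at such primes you need to compare $\ord_p(D'_{m^k})$ with $\ord_p(D_{m^k})$ directly --- i.e., you need the underlying magnified-EDS divisibility $D'_n\mid D_n$ for large $n$ (as in Everest--Miller--Stephens), which neither you nor the paper invokes. The order-preservation argument from the Lemma does guarantee that the entry index is the \emph{same} for both sequences, so the only missing piece is this comparison of valuations at the common entry index.
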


\begin{Example}
We can see the divisibility of corresponding generalized elliptic Fermat numbers using the following magnified elliptic divisibility sequence, which has a degree $2$ isogeny $\phi$ that maps
$$E': y^{2}=x^{3}+x^{2}-4x,~~~\text{with}~P'=[-2,2]$$
to
$$E: y^{2}=x^{3}+x^{2}+16x+16~~~\text{with}~P=[0,4],$$
then we get following list of $F_{k}^{(3)}(E',P')$ and $F_{k}^{(3)}(E,P)$.
\\
\\
\begin{tabular}{l|l}
\def\arraystretch{2}
{$F_{1}^{(3)}(E',P')=3$} & {$F_{1}^{(3)}(E,P)=3$}\\ \\
{$F_{2}^{(3)}(E',P')= 3 * 11 * 107$} & {$F_{2}^{(3)}(E, P) =  3 * 11 * 23 * 107 * 449$}\\ \\
{$F_{3}^{(3)}(E',P')=3 * 3240769000879427$} & {$F_{3}^{(3)}(E,P) =3 * 114078700999 * 3240769000879427 $}\\
{$~~~~~~~~~~~~~~~~~~~~~ * 46385324158085723$} & {$~~~~~~~~~~~~~~~~~~~* 46385324158085723 * 927508107491526089159$}\\
{$~~~~~~~~\vdots$} & {$~~~~~~~~\vdots$}
\def\arraystretch{1}
\end{tabular}
\\
\end{Example}
From Remark \ref{remark1}, we can also describe the growth of generalized elliptic Fermat numbers using the canonical height of $P$.

\begin{Theorem}
\label{Theorem2}
Let $E/\QQ$ be an elliptic curve with a fixed point $P\in E(\QQ)$. Denote by $\hat{h}(P)$ the canonical height of $P$. For any $m$, we get
$$\lim_{k\to\infty}\frac{\log(F_{k}^{(m)}(E,P))}{m^{2k}}=\left(\frac{1}{2}-\frac{1}{2m^{2}}\right)\cdot \hat{h}(P).$$
\end{Theorem}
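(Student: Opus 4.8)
The plan is to reduce everything to the asymptotics of the underlying elliptic divisibility sequence, exactly as in the proof of Theorem \ref{Theorem1} but with the index $2^k$ replaced by $m^k$. Recall from Definition \ref{GEFN def} that for $k \geq 1$ we have $F_k^{(m)}(E,P) = D_{m^k}/D_{m^{k-1}}$, so that
$$\log F_k^{(m)}(E,P) = \log D_{m^k} - \log D_{m^{k-1}}.$$
The key input is Remark \ref{remark1}, which records the standard fact that $\log(D_\ell^2)/\ell^2 \to \hat h(P)$ as $\ell \to \infty$; equivalently $\log D_\ell = \tfrac12 \hat h(P)\,\ell^2 + o(\ell^2)$. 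First I would substitute $\ell = m^k$ and $\ell = m^{k-1}$ into this asymptotic, being a little careful because we will divide by $m^{2k}$ rather than $\ell^2$: write $\log D_{m^k} = \tfrac12\hat h(P) m^{2k} + o(m^{2k})$ and $\log D_{m^{k-1}} = \tfrac12\hat h(P) m^{2(k-1)} + o(m^{2(k-1)}) = \tfrac12\hat h(P) m^{2k}/m^2 + o(m^{2k})$, the last step using $m^{2(k-1)} \le m^{2k}$ so that an $o(m^{2(k-1)})$ term is certainly $o(m^{2k})$.

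Subtracting and dividing by $m^{2k}$ then gives
$$\frac{\log F_k^{(m)}(E,P)}{m^{2k}} = \frac{1}{2}\hat h(P) - \frac{1}{2m^2}\hat h(P) + \frac{o(m^{2k})}{m^{2k}} \xrightarrow[k\to\infty]{} \left(\frac12 - \frac{1}{2m^2}\right)\hat h(P),$$
which is precisely the claimed limit. For the edge case $m = 1$ the statement reads $0 = 0$ and is vacuous (the sequence is constantly $D_1 = 1$), so we may assume $m \ge 2$, in which case $m^{2k} \to \infty$ and the error term genuinely vanishes.

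There is essentially no obstacle here beyond bookkeeping; the only point requiring a word of care is the justification of the limit in Remark \ref{remark1} along the subsequence of indices $m^k$, and the uniformity of the $o(\cdot)$ estimate — but since $\log D_\ell / \ell^2 \to \hat h(P)$ holds along the full sequence $\ell \to \infty$, it holds along any subsequence, and the substitutions above are legitimate. One may alternatively phrase the argument without the $o(\cdot)$ notation: given $\varepsilon > 0$, choose $L$ so that $|\log D_\ell - \tfrac12\hat h(P)\ell^2| < \varepsilon \ell^2$ for all $\ell \ge L$, and then for $k$ large enough that $m^{k-1} \ge L$, bound
$$\left| \frac{\log F_k^{(m)}(E,P)}{m^{2k}} - \left(\frac12 - \frac{1}{2m^2}\right)\hat h(P) \right| \le \frac{\varepsilon m^{2k} + \varepsilon m^{2(k-1)}}{m^{2k}} \le \varepsilon\left(1 + \frac{1}{m^2}\right) \le 2\varepsilon,$$
from which the conclusion follows. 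I would present it in whichever of these two styles matches the level of rigor used earlier in the paper, most likely the concise $o(\cdot)$ version to mirror the proof of Theorem \ref{Theorem1}.
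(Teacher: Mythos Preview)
Your proof is correct and follows essentially the same route as the paper's: write $\log F_k^{(m)} = \log D_{m^k} - \log D_{m^{k-1}}$, divide by $m^{2k}$, and invoke the limit $\log(D_\ell^2)/\ell^2 \to \hat h(P)$ from Remark~\ref{remark1} along the subsequences $\ell = m^k$ and $\ell = m^{k-1}$. Your version is in fact more careful than the paper's, which simply splits the limit without the $o(\cdot)$ or $\varepsilon$ justification you supply.
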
  

\begin{proof}
\begin{align*}
\lim_{k\to\infty}\frac{\log(F_{k}^{(m)}(E,P))}{m^{2k}} &=\lim_{k\to\infty}\frac{\log\left(\frac{D_{m^k}}{D_{m^{k-1}}}\right)}{m^{2k}}\\
&=\lim_{k\to\infty}\frac{\log(D_{m^{k}})}{m^{2k}}-\lim_{k\to\infty}\frac{\log(D_{m^{k-1}})}{m^{2k}}\\
&=\lim_{k\to\infty}\frac{1}{2}\cdot \frac{\log(D^{2}_{m^{k}})}{m^{2k}}-\lim_{k\to\infty}\frac{1}{2m^{2}}\cdot \frac{\log(D^{2}_{m^{k-1}})}{m^{2(k-1)}}\\
&=\left(\frac{1}{2}-\frac{1}{2m^{2}}\right)\cdot \hat{h}(P).
\end{align*}
\end{proof}
Note that when $m=2$, the result coincides with Theorem \ref{Theorem1}.

\begin{Theorem}
Let $E/\QQ$ be a minimal magnified elliptic curve with a fixed point $P\in E(\QQ)$ having a (nonzero) degree $d$ isogeny $\phi: E'\rightarrow E$ from a minimal elliptic curve $E'/\QQ$ satisfying $\phi(P')=P$. For $m$ relatively prime to $d$, $F^{(m)}_{k}(E,P)$ are composite for sufficiently large $k$.
\end{Theorem}

\begin{proof}
Using Siegel's Theorem, we know
$$\hat{h}(P)=d\hat{h}(P'),$$
where $d$ is the degree of the given isogeny $\phi$. Therefore, for sufficiently large $k$, there is a prime divisor which is a proper divisor of $F^{(m)}_{k}(E,P)$ by Theorem \ref{Theorem2}.
\end{proof}

\bibliographystyle{plain}      
\bibliography{edsref.bib}

\end{document}